\theoremstyle{plain} %--default
\newtheorem{theorem}    {Theorem}
\newtheorem{lemma}      [theorem]{Lemma}
\newtheorem{corollary}  [theorem]{Corollary}
\newtheorem{proposition}[theorem]{Proposition}
\newtheorem{conjecture} [theorem]{Conjecture}
\theoremstyle{definition}
\newtheorem{definition} [theorem]{Definition}
\theoremstyle{remark}
\newtheorem{remark}              {Remark}
\newcommand{\Mod}[1]{\ (\text{mod}\ #1)}
\newcommand{\odd}{\operatorname{odd}}
\newcommand{\BC}{\operatorname{BC}}
\newcommand{\Tr}{\operatorname{Tr}}
\newcommand{\SL}{\operatorname{SL}}
\newcommand{\legendre}[2]{\genfrac{(}{)}{1pt}{0}{#1}{#2}}
\newcommand{\ps}[1]{[\![#1]\!]}
\begin{document}

\title{Newton Polygons of Hecke Operators}

\author{Liubomir Chiriac \and Andrei Jorza}

% \institute{L. Chiriac \at Portland State University, Fariborz Maseeh Department of Mathematics and Statistics, Portland, OR 97201 \\\email{chiriac@pdx.edu}
%            \and
%            A. Jorza
% 	\at University of Notre Dame, 275 Hurley Hall, Notre Dame, IN 46556\\\email{ajorza@nd.edu} }

\address{Portland State University, Fariborz
  Maseeh Department of Mathematics and Statistics, Portland, OR
  97201}
\email{chiriac@pdx.edu}
\address{University of Notre Dame, 275 Hurley Hall, Notre Dame, IN 46556}\email{ajorza@nd.edu}

\subjclass[2010]{Primary: 11F33, Secondary: 11F30, 11F85}

\begin{abstract}
In this computational paper we verify a truncated version of the Buzzard-Calegari conjecture on the Newton polygon of the Hecke operator $T_2$ for all large enough weights. We first develop a formula for computing $p$-adic valuations of exponential sums, which we then implement to compute $2$-adic valuations of traces of Hecke operators acting on spaces of cusp forms. Finally, we verify that if Newton polygon of the Buzzard-Calegari polynomial has a vertex at $n\leq 15$, then it agrees with the Newton polygon of $T_2$ up to $n$.

%\bigskip

%\noindent {\sc R\'esum\'e}. Dans cet article, nous v\'erifions une version tronqu\'ee de la conjecture de Buzzard-Calegari concernant le polygone de Newton de l'op\'erateur Hecke $T_2$ pour tous les poids suffisamment grands. Nous d\'eveloppons d'abord une formule pour les valuations $p$-adiques de sommes exponentielles, que nous utilisons ensuite pour calculer les valuations $2$-adiques des traces d'op\'erateurs de Hecke agissant sur des espaces de formes cuspidales. Enfin, nous v\'erifions que si le polygone de Newton du polyn\^ome de Buzzard-Calegari a un sommet en $n\leq 15$, alors il co\"incide avec le polygone de Newton de $T_2$ jusqu'\`a $n$.

%\subclass{11F30 \and 11F33 \and 11F85}

\end{abstract}

\keywords {Traces of Hecke operators \and Slopes of modular forms}

\maketitle

\section{Introduction}

Let $2k\geq 12 $ be an even number and let $S_{2k}$ be the finite-dimensional $\mathbb{C}$-vector space of cusp forms of weight $2k$ on $\SL_2(\mathbb{Z})$. For a prime number $p$ and $f\in S_{2k}$, the action of the Hecke operator $T_p$ on $f$ is given by
$$(T_p f)(z)=\frac{1}{p}\sum_{r=0}^{p-1}f\left(\frac{z+r}{p} \right)+p^{2k-1}f(pz).$$

Motivated by a question of Serre, Hatada
\cite{hatada:eigenvalues} obtained several congruences modulo powers of 2 satisfied
by the eigenvalues $a_2$ of $T_2$, later improved by Emerton \cite{emerton:thesis}. More precisely, among the normalized eigenforms in $S_{2k}$ the lowest 2-adic valuation of $a_2$ is 3 (with multiplicity 1) if $k\equiv 0\pmod{2}$, 4 (with multiplicity 1) if $k\equiv 1\pmod{4}$, 5 (with multiplicity 1) if $k\equiv 3\pmod{8}$, and 6 (with multiplicity 2) if $k\equiv 7\pmod{8}$.

Hatada's congruences represent some of the first results
concerning the 2-adic valuations of the eigenvalues of $T_2$,
refered to as $T_2$-slopes. The list of slopes is
determined by the 2-adic Newton polygon of the characteristic polynomial
$$P_{T_2}(X):=\det(1-T_2X\mid S_{2k} )\in \mathbb{Z}[X],$$
which is defined as the convex hull of the set of points $(i,v_2(b_i))$, where $v_2$ denotes the 2-adic valuation, and $b_i$ is the coefficient of $X^i$ in $P_{T_2}(X)$.

In \cite{buzzard-calegari:slopes} Buzzard and Calegari conjectured:

\begin{conjecture}[Buzzard-Calegari]\label{c:bc}
If $m=\dim S_{2k}$ then the polynomial 
$$P_{BC}(X):=1+\sum_{n=1}^m X^n \prod_{j=1}^n \frac{2^{2j}(2k-8j)!(2k-8j-3)!(2k-12j-2)}{(2k-12j)!(2k-6j-1)!}$$ 
has the same 2-adic Newton polygon as the characteristic polynomial $P_{T_2}(X)$. 
\end{conjecture}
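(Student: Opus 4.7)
The plan is to compare, coefficient by coefficient, the 2-adic valuations of $P_{BC}(X)$ and $P_{T_2}(X)$, and then verify that their lower convex hulls coincide.

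First I would obtain a closed formula for $v_2$ of the coefficient $c_n(k)$ of $X^n$ in $P_{BC}(X)$. Since $c_n(k)$ is a product of factorials and powers of $2$, Legendre's formula $v_2(m!) = m - s_2(m)$ rewrites $v_2(c_n(k))$ as an affine-linear function of $k$ minus a sum of binary digit sums $s_2$ applied to various linear expressions in $k$ and $j$. The resulting expression is a piecewise-affine function of $k$ whose slopes and breakpoints are controlled by a prescribed window of the binary expansion of $k$, and which can be tabulated on each residue class of $k$ modulo a sufficiently large power of $2$.

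Next I would obtain a matching formula for $v_2$ of the coefficient $b_n(k)$ of $X^n$ in $P_{T_2}(X)$. By the Eichler--Selberg trace formula, $\Tr(T_{2^r} \mid S_{2k})$ decomposes into an identity term, an explicit hyperbolic contribution supported on divisors of $2^r$, and an elliptic term of the shape $-\frac{1}{2}\sum_{t^2 < 2^{r+2}} G_k(t, 2^r)\, H(4\cdot 2^r - t^2)$, where $G_k$ is a Gegenbauer-type polynomial and $H$ is a Hurwitz class number. Newton's identities together with the Hecke recursion $T_{2^r} = T_2 T_{2^{r-1}} - 2^{2k-1} T_{2^{r-2}}$ express $b_n$ as a polynomial in $\Tr(T_{2^r})$ for $r \leq n$, and applying the paper's machinery for 2-adic valuations of exponential sums to each elliptic contribution should produce, in principle, a piecewise-affine formula for $v_2(b_n(k))$ of the same form as the formula for $v_2(c_n(k))$. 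Matching the two formulas on every residue class of $k$ modulo each $2^N$ would prove the equality of Newton polygons.

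The main obstacle is that these formulas can agree only after massive cancellation. Generically $v_2(\Tr(T_{2^r}))$ is strictly smaller than $v_2(b_n)$, so Newton's identities force the elliptic sums for different $r$ to cancel each other to \emph{exactly} the order predicted by $P_{BC}$; unconditionally controlling this cancellation appears to be no easier than the conjecture itself. Any serious attempt therefore seems to need an additional structural input, for instance an Atkin--Lehner or eigenvariety-based decomposition $S_{2k} \otimes \mathbb{Q}_2 = \bigoplus V_j$ compatible with the product factorization of $P_{BC}$, or an identification of $P_{BC}$ with the characteristic polynomial of $U_2$ on a carefully chosen overconvergent lattice that specializes to $S_{2k}$ at classical weights. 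Isolating such a structural input --- and linking it to the precise factorials defining $c_n(k)$ --- is the natural next step beyond the finite-$n$ verification pursued in this paper.
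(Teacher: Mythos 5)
You are attempting to prove the full Buzzard--Calegari statement, but note first that in the paper this is an open \emph{conjecture}: the paper itself proves only a truncated, asymptotic verification (Theorems \ref{MainTh} and \ref{c:main}), namely that for $k\gg 0$ the polygon $N_{\BC}$ has a vertex $n\in[7,15]$ and $N_{\BC}^{\leq n}=N(P_{T_2}^{\deg\leq n})$, obtained by computing explicit valuation formulas for $v_2(\Tr(\wedge^n T_2|S_{2k}))$ for $n\leq 15$ and then checking a finite cover of weight space by $2$-adic balls. Your proposal does not close this gap, and your own final paragraph concedes the decisive step is missing; so what you have is a plan with an acknowledged hole, not a proof.

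Beyond the admitted need for ``structural input,'' the specific mechanism you propose --- matching a closed formula for $v_2(c_n(k))$ (via Legendre) against a same-shaped formula for $v_2(b_n(k))$ on every residue class of $k$ --- cannot work as stated, for three concrete reasons. First, the exponential-sum machinery (Theorem \ref{t:vp seq}, Corollary \ref{c:wedge n}) yields $v_2(b_n(k))=\frac{3n(n+1)}{2}+\sum_i v_2(k-\Omega_i)+\sum_j\min(n_j,d_j v_2(k-u_j))$ where the $\Omega_i\in\mathbb{Z}_2$ are only $2$-adically \emph{close} to the integers $\ell$ appearing in $v_2(c_n(k))$ (e.g.\ for $n=1$ one has $v_2(\Omega-7)=10$); hence for $k$ deep enough in the ball around such an $\ell$ the coefficientwise valuations genuinely differ, and only the convex hulls can agree --- indeed the paper exhibits $k\equiv 131126\pmod{2^{17}}$ where even the truncated polygons fail to match at a non-vertex. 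Second, the formula for $v_2(b_n(k))$ is valid only for ineffectively large $k$: the ``$k$ large enough'' comes from Proposition \ref{c:super}, i.e.\ from Schlickewei's subspace theorem, whose finiteness is not effective, so no residue-class check can cover all weights. Third, the constants are computable only to finite precision and the number of exponential terms (and of monomials in the Bell polynomials) grows exponentially in $n$, while the range of $n$ needed is $m=\dim S_{2k}$, which grows with $k$; so ``matching the formulas for every $n\leq m$'' is not a finite procedure. These are exactly the obstructions that force the paper to a truncated statement, together with Lemmas \ref{l:vertex} and \ref{l:vertex T2} to relate truncations to the full polygons; your suggested reformulation via $U_2$ on overconvergent forms is already how Buzzard--Calegari framed the conjecture and does not by itself supply the missing argument.
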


This conjecture naturally belongs to the line of questions
raised by Buzzard \cite{buzzard:questions}, in an effort to
better understand the geometry of Coleman-Mazur eigencurves. Indeed, one can reformulate Conjecture~\ref{c:bc} as a statement about the Newton polygon of the characteristic polynomial of $U_2$ acting on the space of $2^{-1/2}$-overconvergent $2$-adic cusp forms of tame level one and nonpositive integer weight. More general conjectures have been formulated for other primes in \cite{liu-wan-xiao} and \cite{bergdall-pollack:ghost1} in connection with Coleman's spectral halo conjecture. 

From a different perspective, the polynomial $P_{T_2}(X)$ is particularly relevant in the context of Maeda's conjecture \cite{hida-maeda}. For example, it has been checked numerically in \cite{ghitza-mcandrew}  that $P_{T_2}(X)$ is irreducible over $\mathbb{Q}$ and its Galois group is the full symmetric group of degree $\dim S_{2k}$ for all $2k\leq 14000$. Furthermore, the irreducibility of $P_{T_2}(X)$ in all weights would imply the same property for the characteristic polynomial of $T_p$ for a density one set of primes $p$ (see \cite{baba-murty}).

The main result of this paper is a computational verification of a truncated version of the Buzzard-Calegari conjecture. In what follows, we denote by $N(P)$ the Newton polygon of a polynomial $P(X)$. If $N$ is a Newton polygon by the truncation $N^{\leq m}$ at $m$ we mean the portion of $N$ in the region $0\leq n\leq m$. We remark that $N(P^{\deg\leq m})\geq N(P)^{\leq m}$, but the two polygons need not be the same. In Theorem \ref{c:main} we prove the following:

\begin{theorem} \label{MainTh}
For $k\gg 0$ the 2-adic Newton polygon $N_{\BC}$ of the Buzzard-Calegari polynomial $P_{\BC}$ has some vertex $n$ 
in the interval $[7,15]$. For any such vertex: \[N_{\BC}^{\leq n}=N(P_{T_2}^{\deg\leq n}).\] Moreover, $N(P_{T_2}^{\deg\leq 15})\geq N_{\BC}$.
\end{theorem}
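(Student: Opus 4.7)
The strategy is to combine the explicit form of $P_{\BC}$ with the exponential‑sum valuation formula developed earlier in the paper, so as to pin down $N(P_{T_2}^{\deg\leq 15})$ precisely enough to compare with $N_{\BC}$ directly.

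First, since the coefficients of $P_{\BC}$ are explicit products of factorials and powers of $2$, Legendre's formula yields a closed form for their $2$‑adic valuations as functions of $k$ that depends only on the residue class of $k$ modulo a suitable power of $2$. From this description I would read off the vertices of $N_{\BC}$ and, by a finite case analysis across the relevant residue classes, verify that for $k\gg 0$ at least one vertex lies in $[7,15]$.

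Next, for the comparison with $P_{T_2}$, the key input is the Eichler--Selberg trace formula, which expresses $\Tr(T_2^j\mid S_{2k})$ for $j=1,\ldots,15$ as a sum of terms indexed by imaginary quadratic orders and involving exponential‑type contributions. Applying the paper's $2$‑adic valuation formula term by term produces an exact value for $v_2(\Tr(T_2^j))$ in each residue class of $k$ modulo a sufficiently high power of $2$. Newton's identities then convert these trace valuations into valuations for the coefficients $b_1,\ldots,b_{15}$ of $P_{T_2}(X)$, from which $N(P_{T_2}^{\deg\leq 15})$ is read off. With both $N_{\BC}^{\leq 15}$ and $N(P_{T_2}^{\deg\leq 15})$ now described as piecewise‑linear functions of the $2$‑adic residue class of $k$, both the equality $N_{\BC}^{\leq n}=N(P_{T_2}^{\deg\leq n})$ at each vertex $n\in[7,15]$ and the inequality $N(P_{T_2}^{\deg\leq 15})\geq N_{\BC}$ reduce to finite computer checks carried out in each residue class.

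The main obstacle is the second stage: Newton's identities involve the cancellation of large terms, so to obtain the \emph{exact} $2$‑adic valuations of the $b_i$ (rather than mere lower bounds) one must control the sums arising from Eichler--Selberg with precision strictly better than the claimed valuations of the outputs. This is precisely what the sharp exponential‑sum valuation formula is designed to supply, and the hypothesis $k\gg 0$ is what guarantees that the asymptotic contributions dominate uniformly in each fixed $2$‑adic residue class, so that the computed valuations stabilize into a finite verification.
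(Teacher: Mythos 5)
Your high-level outline (explicit description of $N_{\BC}$ from the factorial coefficients, Eichler--Selberg for the traces, the valuation theorem for exponential sums, a finite computer check per residue class) is on the right track, but the middle step as written has a genuine gap, and the "finite case analysis" is too naive in two separate places.

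The central problem is the step ``Newton's identities then convert these trace valuations into valuations for the coefficients $b_1,\ldots,b_{15}$.'' Knowing $v_2(\Tr(T_2^j))$ exactly for each $j$ does \emph{not} determine $v_2(b_n)$: the Newton/Bell-polynomial relations express $b_n$ as a sum of products of the power sums, and $v_2$ of a sum is not a function of the $v_2$'s of its terms when cancellation occurs. You acknowledge the cancellation, but your proposed fix --- that ``the sharp exponential-sum valuation formula'' supplies the needed precision --- misattributes what that theorem does: it outputs a \emph{valuation}, not a high-precision approximation of the value. The correct resolution (and what the paper actually does) is to stay at the level of exponential sums (equivalently, of the associated $p$-adic analytic power series in $k$): each $\Tr(T_{2^i}\mid S_{2k})$ is an exponential sum in $k$, and $\Tr(\wedge^n T_2\mid S_{2k})$ is obtained by feeding these into a Bell polynomial, which again yields an exponential sum/power series in $k$. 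One approximates that composite power series to sufficient $2$-adic precision, performs the Newton slope factorization on it, and only then extracts a closed-form valuation (Theorem~\ref{t:main}). Composing before taking valuations is the step your plan is missing.

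Two further gaps. First, ``finite case analysis across the relevant residue classes'' does not suffice as stated: the quantities $v_2(k-\Omega_i)$ and $v_2(k-\ell)$ are unbounded and not determined by any fixed congruence class of $k$, so one cannot literally enumerate residue classes. The paper handles this by observing that each transcendental $\Omega_i$ is $2$-adically close to the matching integer $\ell$ in \eqref{eq:bc}, so $v_2(k-\Omega_i)=v_2(k-\ell)$ outside a small ball; it then runs an iterative refinement over a shrinking collection of $2$-adic balls until every Newton polygon in play is ``precisely computed.'' Second, the claim that $N_{\BC}$ (the polygon of the \emph{full} $P_{\BC}$, whose degree $m=\dim S_{2k}$ grows with $k$) has a vertex in $[7,15]$ cannot be verified by looking only at finitely many coefficients without an a priori truncation bound; this is exactly what Lemma~\ref{l:vertex} provides (an explicit degree beyond which later coefficients cannot interfere), and your plan has no analogue of it.
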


Theorem \ref{MainTh} relies on a general method for computing
$p$-adic valuations of exponential sums (see \S\ref{exp sums},
particularly Theorem~\ref{t:vp seq}). Using the Eichler-Selberg
trace formula we write the coefficients of $P_{T_2}$ as
exponential sums whose 2-adic valuations we can express in
closed form, which we made explicit for $P_{T_2}^{\deg \leq 15}$
in Sage \cite{sage}. Finally, using these explicit formulas we
verify computationally Theorem \ref{MainTh}. 

It is important to emphasize that Theorem \ref{MainTh} no longer
holds if we replace the claim $N_{\BC}^{\leq
  n}=N(P_{T_2}^{\deg\leq n})$ for a vertex $n$ of $N_{\BC}$,
with $N_{\BC}^{\leq 15}=N(P_{T_2}^{\deg\leq 15})$. For instance,
if $k\equiv 131126\pmod{2^{17}}$ then $N(P_{T_2}^{\deg\leq 15})$
has a vertex at 14, whereas $N_{\BC}^{\leq 15}$ does not. 

Nevertheless, we show an effective relationship between the Newton polygons of $P_{T_2}$ and its truncation: for each $n\geq 1$ there exists an $m_{n,k}$ given by an explicit formula in terms of $n$ and $k$ with the property that $n$ is a vertex of $N(P_{T_2})$ if and only if it is a vertex of $N(P_{T_2}^{\deg\leq m_{n,k}})$ (see Lemma \ref{l:vertex T2}). In principle, with sufficient computing power one would be able to verify that up to a vertex $n\leq 15$, the Newton polygons of the Buzzard-Calegari conjecture coincide, i.e., that $N_{\BC}^{\leq n}=N(P_{T_2})^{\leq n}$. Our computations suffice to show that $N_{\BC}^{\leq 4} = N(P_{T_2})^{\leq 4}$ for all sufficiently large $k$ and, e.g., that $N_{\BC}^{\leq 13} = N(P_{T_2})^{\leq 13}$ for $k\equiv 34\pmod{2^{11}}$.

Finally, we record one consequence of Theorem \ref{MainTh}  to Hatada's congruences: 

\begin{corollary}  \label{cor}
Among the normalized eigenforms in $S_{2k}$, for $k\gg 0$, the lowest two 2-adic valuations of $a_2$ are:
\begin{center}
% \begin{tabular}{ll}
% $3, 7$ & $k\equiv 0\pmod{4}$ \\
% $3, 8$ & $k\equiv 6\pmod{8}$ \\
% $3, 9$ & $k\equiv 2\pmod{8}$ \\
% $4, 8$ & $k\equiv 1\pmod{8}$ \\
% $4, 9$ & $k\equiv 5\pmod{16}$ \\
% $4, 10$ & $k\equiv 29\pmod{32}$ \\
% $4, 11$ & $k\equiv 45\pmod{64}$ \\
% $4, 12$ & $k\equiv 13\pmod{64}$ \\
% $5, 8$ & $k\equiv 3\pmod{16}$ \\
% $5, 9$ & $k\equiv 27\pmod{32}$ \\
% $5, 10$ & $k\equiv 11\pmod{32}$ \\
% $6, 6$ & $k\equiv 7\pmod{8}$.
% \end{tabular}
\begin{tabular}{ll || ll || ll}
$3, 7$ & if $k\equiv 0\pmod{4}$ &$4, 9$ & if $k\equiv 5\pmod{16}$ &$5, 8$ & if $k\equiv 3\pmod{16}$ \\
$3, 8$ & if $k\equiv 6\pmod{8}$ &$4, 10$ & if $k\equiv 29\pmod{32}$ &$5, 9$ & if $k\equiv 27\pmod{32}$ \\
$3, 9$ & if $k\equiv 2\pmod{8}$ &$4, 11$ & if $k\equiv 45\pmod{64}$ &$5, 10$ & if $k\equiv 11\pmod{32}$\\
$4, 8$ & if $k\equiv 1\pmod{8}$ &$4, 12$ & if $k\equiv 13\pmod{64}$ &$6, 6$ & if $k\equiv 7\pmod{8}$.\\
\end{tabular}
\end{center}
Moreover, a consequence of $N_{\BC}^{\leq 4}=N(P_{T_2})^{\leq 4}$ is that one can list exhaustively the lowest four 2-adic valuations of $a_2$.
\end{corollary}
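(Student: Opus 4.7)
The plan is to derive the corollary from Theorem \ref{MainTh} by an explicit valuation computation on the coefficients of $P_{\BC}$ in each of the twelve congruence classes. The 2-adic valuations of the eigenvalues $a_2$ of $T_2$ on the normalized eigenforms in $S_{2k}$ are, with multiplicity, the slopes of $N(P_{T_2})$, so it suffices to determine the first two slopes of $N(P_{T_2})$.

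First I would compute $v_2$ of the first few coefficients
\[c_n=\prod_{j=1}^n \frac{2^{2j}(2k-8j)!(2k-8j-3)!(2k-12j-2)}{(2k-12j)!(2k-6j-1)!}\]
of $P_{\BC}$ for $n=1,2$ (and $n=3,4$ for the ``moreover''). After telescoping the factorial ratios, $c_n$ becomes a product of an explicit power of $2$ with linear forms $2k-a$; each such form contributes $v_2=0$ when $a$ is odd and $v_2=1+v_2(k-a/2)$ when $a$ is even. Hence $v_2(c_n)$ reduces to a signed sum $\alpha_n+\sum_{a}\pm v_2(k-a/2)$ with explicit $\alpha_n\in\mathbb{Z}$ and $a$ running over a small finite set; each summand depends only on $k$ modulo a small power of $2$. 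The twelve cases in the corollary correspond exactly to the partition of $k\bmod 64$ that pins down $v_2(c_1)$ and $v_2(c_2)$.

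Next, case by case, I would read off the first two vertices of $N_{\BC}$ and their slopes. For instance, when $k\equiv 0\pmod 4$ one finds $v_2(c_1)=3$ and $v_2(c_2)=10$, so the first two vertices are $(1,3)$ and $(2,10)$ with slopes $3$ and $7$; when $k\equiv 7\pmod 8$ one has $v_2(c_1)\geq 6$ and $v_2(c_2)=12$, so $(2,12)$ is the first nontrivial vertex and both initial slopes equal $6$. The remaining ten entries are obtained by identical calculations, together with an elementary check that no further $c_j$ drops below the identified edges. Finally I would invoke Theorem \ref{MainTh}: $N_{\BC}$ has some vertex $n\in[7,15]$ at which it agrees with $N(P_{T_2}^{\deg\leq n})$, and since the two initial vertices identified above lie at positions $\leq 2<n$, they are also vertices of $N(P_{T_2})$ with the same slopes---truncation and full Newton polygon coincide up to any vertex of the truncation not exceeding the truncation degree. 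This produces the tabulated pairs. The ``moreover'' clause is obtained by the same scheme applied to $c_3$ and $c_4$ under the extra hypothesis $N_{\BC}^{\leq 4}=N(P_{T_2})^{\leq 4}$.

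The main obstacle is the combinatorial bookkeeping: $v_2(c_n)$ is a signed sum of several $v_2(k-a/2)$ whose contributions combine or cancel in ways dictated by the binary expansion of $k$, and delineating the precise modular conditions on $k$ separating each of the twelve cases requires a careful analysis modulo $2^6$ (carried out in practice via the Sage implementation described in the paper).
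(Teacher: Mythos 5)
Your overall strategy matches the paper's: compute the first two slopes of $N_{\BC}$ (equivalently of $N(P_{T_2}^{\deg\leq 15})$, via Theorem~\ref{MainTh}), then transfer to $N(P_{T_2})$. The explicit valuation computations you propose are fine --- your sample values for $k\equiv 0\pmod 4$ and $k\equiv 7\pmod 8$ agree with formula~\eqref{eq:bc}.

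However, the final transfer step has a genuine gap. You assert that ``truncation and full Newton polygon coincide up to any vertex of the truncation not exceeding the truncation degree.'' This is false in general, and the paper explicitly warns against it in the introduction: $N(P^{\deg\leq m})\geq N(P)^{\leq m}$, ``but the two polygons need not be the same.'' A vertex of $N(P_{T_2}^{\deg\leq n})$ at $x\leq 2$ need not be a vertex of $N(P_{T_2})$: a point $(j, v_2(b_j))$ with $j>n$ could in principle lie strictly below the extension of the truncation's initial edges and drag the full polygon down. What is missing is an a~priori lower bound on $v_2(b_j)$ for large $j$ that rules this out. This is exactly what the paper supplies in the proof of Lemma~\ref{l:vertex T2}: the Vonk--Wan estimate $v_2(b_n)\geq 2n^2-8n+35/8$ (equation~\eqref{eq:vonk}), which forces all slopes of $N(P_{T_2})$ in the region $x\geq 15$ to be at least $12$. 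Since the two initial slopes you compute are all $\leq 12$, convexity then pins them down as the first two slopes of the full polygon. Without invoking~\eqref{eq:vonk} (or an equivalent growth bound on the coefficients), your argument does not close. The same caveat applies to the ``moreover'' clause about the lowest four slopes.
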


The article is organized as follows: in \S\ref{exp sums} we give a closed form expression for $p$-adic valuations of finite exponential sums; in \S\ref{s:hecke} we apply our previous findings to Hecke traces (Corollary~\ref{c:wedge n}); in \S\ref{refine} we discuss certain computational improvements obtained from sums involving Hurwitz class numbers; finally, in \S\ref{s:bc}, we explain our computational verification of Theorem \ref{MainTh}. The explicit terms that appear in the 2-adic valuations of the traces of $T_2$ are tabulated in the Appendix. 

\bigskip

\textit{Acknowledgements}. We are grateful to Damien Roy for many useful suggestions, and especially for showing us the proof of Proposition~\ref{c:super}, to the anonymous referee for encouraging us to work out the relationship between the Newton polygons of $P_{T_2}$ and its truncation, and to John Bergdall for helpful conversations.

\section{Valuations of exponential sequences} \label{exp sums}

 In this section we describe a general method for computing valuations of finite exponential sums. When applying the recipe below to the exponential sums coming from the trace formula we encounter a technical complication reminiscent of Schanuel's conjecture, but which can be circumvented computationally, if not theoretically.

Let $K/\mathbb{Q}_p$ be a finite extension with ring of integers
$\mathcal{O}$, uniformizer $\varpi$ and residue field
$\mathbb{F}$. We denote $v_K$ the valuation on $K$ with $v_K(\varpi)=1$ and $\omega$ any section of $\mathcal{O}\to \mathbb{F}$, e.g., the Teichmuller lift. In this section we compute the valuations of certain functions on $\mathcal{O}$.

We begin with a lemma describing an explicit algorithm for computing valuations of polynomials.

\begin{lemma}\label{l:vp poly}
Let $P(x)\in K[x]$ be a monic irreducible polynomial
of degree $d\geq 2$ and slope
$\lambda\in\frac{1}{d}\mathbb{Z}$. Suppose that $k\in \mathcal{O}$.
\begin{enumerate}
\item If $\lambda<0$ then $v_K(P(k))=v_K(P(0))$.
\item If $\lambda\geq 0$ then there exists a (possibly empty) sequence $u_0,u_1,\ldots, u_n\in
\mathbb{F}$ and $\delta\in \mathbb{Z}$ such that
\[v_K(P(k)) = \min(d (\lambda+n+1)+\delta , dv_K(k-\sum\limits_{i=0}^n \varpi^{\lambda+i}\omega(u_i))).\]
\end{enumerate}

\end{lemma}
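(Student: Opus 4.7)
My plan is to factor $P(x)=\prod_{i=1}^d(x-\alpha_i)$ over the splitting field of $P$ in $\overline{K}$, where each root satisfies $v_K(\alpha_i)=\lambda$. Since $P$ is irreducible and $k\in K$ is Galois invariant, the extended valuation satisfies $v_K(k-\alpha_i)=v_K(k-\alpha_j)$ for all $i,j$, so
\[v_K(P(k))=d\cdot v_K(k-\alpha)\]
for any fixed root $\alpha$. Both parts of the lemma thereby reduce to computing this single quantity, and part (1) follows immediately: the hypothesis $\lambda<0\leq v_K(k)$ forces $v_K(k-\alpha)=\lambda$ by the strong triangle inequality, and multiplying by $d$ matches $v_K(P(0))=d\lambda$.

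For part (2), I would first extract from $P$ alone an optimal $K$-approximation to $\alpha$. Define $M:=\sup\{v_K(\nu-\alpha):\nu\in K\}$; since $K$ is complete and $\alpha\notin K$ (as $d\geq 2$), this supremum is finite and attained by some $\mu\in K$. I would construct $\mu$ iteratively: starting from $\mu_0:=0$, at step $i$ I examine the residue of $(\alpha-\mu_i)\varpi^{-(\lambda+i)}$ in the residue field of $K(\alpha)$, provided this quotient lies in $\mathcal{O}_{K(\alpha)}$; if the residue lies in the subfield $\mathbb{F}$, I lift it via $\omega$ to $u_i$ and update $\mu_{i+1}:=\mu_i+\varpi^{\lambda+i}\omega(u_i)$, and otherwise I stop. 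The process must terminate, since otherwise the partial sums $\mu_i\in K$ would converge in the complete field $K$ to $\alpha$, contradicting $\alpha\notin K$. The resulting $\mu=\sum_{i=0}^n\varpi^{\lambda+i}\omega(u_i)$ realizes $M$: any $\nu\in K$ with $v_K(\nu-\alpha)>v_K(\mu-\alpha)$ would force $v_K(\nu-\mu)=v_K(\alpha-\mu)$, and the residues of $(\nu-\mu)/\varpi^{\lambda+n+1}$ (necessarily in $\mathbb{F}$) and of $(\alpha-\mu)/\varpi^{\lambda+n+1}$ would have to match, contradicting the stopping rule.

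With $\mu$ and $M$ in hand, I combine them with $k$ by expanding $k-\alpha=(k-\mu)-(\alpha-\mu)$ and applying the ultrametric inequality together with the maximality of $M$, obtaining
\[v_K(k-\alpha)=\min(v_K(k-\mu),M)\qquad\text{for all }k\in\mathcal{O}.\]
Multiplying by $d$ and setting $\delta:=dM-d(\lambda+n+1)$ produces the displayed formula. Integrality of $\delta$ follows from $M\in\tfrac{1}{e}\mathbb{Z}\subseteq\tfrac{1}{d}\mathbb{Z}$, where $e\mid d$ is the ramification index of $K(\alpha)/K$, combined with $d\lambda\in\mathbb{Z}$.

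The main obstacle I anticipate is the ramified regime ($e>1$), where the iteration may halt not because a residue leaves $\mathbb{F}$ but because $v_K(\alpha-\mu_{i+1})$ lands at $\lambda+i+\tfrac{1}{e}$ rather than $\lambda+i+1$, forcing $\delta<0$; there is also the edge case of an empty sequence when $\lambda\notin\mathbb{Z}$. The uniform way to handle both is to observe that any putative improvement $\nu\in K$ of $\mu$ would force $v_K(\nu-\mu)\in\mathbb{Z}$ to equal $v_K(\alpha-\mu)\notin\mathbb{Z}$; organizing this case analysis so that the integer $\delta$ records the correct fractional slack in every branch is the delicate part.
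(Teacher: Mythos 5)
Your proof is correct, and it takes a genuinely different route from the paper's. The paper works entirely over $K$: it builds a recursive chain of polynomials $P_{n+1}(x)=\varpi^{-d}P_n(\varpi x+\omega(u_n))$, shows by irreducibility that the chain terminates, and derives the formula by induction on the length of the chain. You instead pass to the splitting field, use Galois-invariance of the extended valuation to collapse $v_K(P(k))$ to $d\,v_K(k-\alpha)$ for a single root $\alpha$, and then construct the optimal $K$-rational approximation $\mu$ to $\alpha$. The two constructions are mirror images --- the paper's shifted-and-rescaled polynomials implicitly track your centers $\mu_i$ and slopes $\ell_n$ encode $v_K(\alpha-\mu_n)$ --- but yours is more conceptual: it makes visible that the formula measures the ultrametric distance from $k$ to the unique nearest root, with $\delta$ transparently recording the obstruction $M=v_K(\alpha-\mu)$ via $\delta=dM-d(\lambda+n+1)$ (an integer since $d\lambda\in\mathbb{Z}$ and $dM\in\tfrac{d}{e}\mathbb{Z}\subseteq\mathbb{Z}$). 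The paper's version has the complementary virtue of never leaving $K$, which matters for the algorithmic implementation the authors actually carry out. One small note: the concluding worry about ``organizing the case analysis'' so that $\delta$ records the right slack is unfounded. Once you set $\delta:=dM-d(\lambda+n+1)$, the identity $v_K(k-\alpha)=\min(v_K(k-\mu),M)$ holds uniformly across all branches: if $M\notin\mathbb{Z}$ the potential tie $v_K(k-\mu)=M$ simply cannot occur for $k\in K$, and if $M\in\mathbb{Z}$ a tie forces $v_K(k-\alpha)\geq M$ which maximality of $M$ caps at equality. No branch-by-branch bookkeeping is needed, and this also handles the empty-sequence case $\lambda\notin\mathbb{Z}$ ($\mu=0$, $M=\lambda$, $\delta=0$) and the ramified early-stop uniformly.
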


\begin{proof}
The first part is straightforward. In the second part, if $\lambda$ is not integral we may choose $\delta=0$ and the empty sequence. If $\lambda\in \mathbb{Z}_{\geq 0}$ we remark that the polynomial $\varpi^{-d \lambda}P(\varpi^\lambda x)$ is irreducible with slope 0 and therefore it suffices to treat the case $\lambda=0$.

We construct a sequence of
 integral polynomials $(P_n(X))_{n\geq 0}$ and a sequence
$(u_n)_{n\geq 0}$ as follows. Let $P_0(x)=P(x)$. Suppose we have
already constructed monic irreducible polynomials
$P_0(x),\ldots, P_n(x)$ with slopes
$\ell_0,\ldots,\ell_n\in\mathbb{Z}_{\geq 0}$. If $\ell_n<0$ or
$\ell_n\notin \mathbb{Z}$ or $P_n\!\!\mod\varpi$ has no root in $\mathbb{F}$ the sequence terminates at $P_n(x)$. Otherwise, $\ell_n\in \mathbb{Z}_{\geq 0}$ and $P_n(x)\!\!\mod\varpi$ has a single root in $\mathbb{F}$, which we denote $u_n$. Define $P_{n+1}(x)=\varpi^{-d}P_n(\varpi x+\omega(u_n)).$ The polynomial $P_{n+1}(x)$ is also monic irreducible and therefore isoclinic with slope $\ell_{n+1}$, and we proceed as above.

First, we show that the process always terminates. Otherwise,
$\ell_{n}\in \mathbb{Z}_{\geq 0}$ and
$P_{n+1}(x)=\varpi^{-d}P_n(\varpi x+\omega(u_n))$ for all $n$. In this case,
\[\varpi^{nd}P_{n}(0)=P \left(\sum_{i=0}^{n-1}\varpi^i \omega(u_i) \right)\]
for all $n$. We conclude that $\sum\limits_{i=0}^\infty \varpi^i \omega(u_i)\in
K$ is a root of $P(x)$, contradicting its
irreducibility.

Suppose now that the sequence of polynomials is $P_0,\ldots,
P_{n+1}$ with associated sequence $u_0,\ldots, u_n\in
\mathbb{F}$, in which case either $P_{n+1}(x)$ has slope $\frac{\delta}{d}$ not in
$\mathbb{Z}_{\geq 0}$ or $P_{n+1}$ has no root in $\mathbb{F}$, in which case we set $\delta=0$. We will prove the
desired formula by induction on $n$. The base case is
$n=-1$. Since $P(x)$ has nonnegative integral slope we deduce
that $P(x)$ has no root in $\mathbb{F}$, $\lambda=0$ and $\delta=0$ by
definition and therefore
$v_K(P(k))=0=\min(d(\lambda+n+1)+\delta,dv_K(k))$.

For the induction step, note that for the polynomial
$P_1(x)$ the sequence is $P_1,\ldots, P_{n+1}$. First, if $k\not\equiv u_0\pmod{\varpi}$ then $v_K(P(k))=0$. Otherwise, if $k\equiv u_0\pmod{\varpi}$ the inductive
hypothesis implies that
\begin{align*}
  v_K(P(k))&=d+v_K(P_1((k-\omega(u_0))/\varpi))\\
&=d+\min(d(\lambda+n)+\delta, d v_K((k-\omega(u_0))/\varpi-\sum_{i=0}^{n-1}\varpi^{i} \omega(u_{i+1}))\\
&=\min(d(\lambda+n+1)+\delta, d v_K(k-\sum_{i=0}^{n}\varpi^{i} \omega(u_{i})).
\end{align*}

\end{proof}

We turn to the main results of this section. Our aim is to
compute, for each positive integer $k$, the valuation of a finite
exponential sum of the form:
\[f(k)=\sum_{n=1}^m a_n b_n^k,\]
where $a_1,\ldots, a_m\in K$ and $b_1,\ldots, b_m\in
\mathcal{O}$ are nonzero. 
\begin{proposition}\label{p:vp seq}
Suppose $a_1,\ldots, a_m\in K^\times$ and $b_1,\ldots, b_m\in \mathcal{O}^\times$. There exists an integer $D$ and for
each class $r\pmod{D}$ there exist $\lambda_r\in \mathbb{Z}$
 and possibly empty collections of $\Omega_{r,i}, u_{r,j}\in \mathcal{O}$, $n_{r,j}\in \mathbb{Z}_{\geq 0}$, and $d_{r,j}\in \mathbb{Z}_{\geq 2}$ such that
\[v_K(f(k)) = \lambda_r+\sum_i v_K(k-\Omega_{r,i})+\sum_j\min(n_{r,j}, d_{r,j} v_K(k-u_{r,j})),\]
for each $k\equiv r\pmod{D}$.

Moreover, all the constants above are effective and can be computed up to arbitrary precision in polynomial time.
\end{proposition}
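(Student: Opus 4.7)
The strategy is to express $f$ as the restriction to integers of a $p$-adic analytic function on each residue class, and then invoke the $p$-adic Weierstrass preparation theorem in tandem with Lemma~\ref{l:vp poly}.

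Using the Teichm\"uller decomposition, write each unit $b_n=\omega(\bar b_n)\langle b_n\rangle$ with $\langle b_n\rangle\in 1+\varpi\mathcal{O}$, so that $k\mapsto\omega(\bar b_n)^k$ has period dividing $q-1=|\mathbb{F}^\times|$. Choose $D=(q-1)p^c$ with $c$ large enough that $\log(b_n^D)$ lies in the domain of convergence of $\exp$ for every $n$. Setting $\ell=(k-r)/D$ for $k\equiv r\pmod{D}$, we get
\[f(k)=\sum_{n=1}^m a_n\,b_n^r\,\exp\bigl(\ell\log(b_n^D)\bigr)=:g_r(\ell),\]
and $g_r$ extends to a $p$-adic entire function of $\ell\in\mathbb{C}_p$.

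Assuming $g_r\not\equiv 0$, the $p$-adic Weierstrass preparation theorem applied on the closed unit disk yields a factorization $g_r(\ell)=P_r(\ell)U_r(\ell)$, where $P_r\in K[\ell]$ is a polynomial whose roots (in $\overline{K}$) are precisely the zeros of $g_r$ in this disk and $U_r$ is a nowhere-vanishing analytic function with $v_K(U_r(\ell))$ equal to a constant on $\mathcal{O}$. Decompose $P_r$ over $K$ into its leading coefficient, linear factors $\ell-\alpha_i\in K[\ell]$, and monic irreducible factors $Q_j(\ell)$ of degrees $d_j\geq 2$, all of which have roots in the closed unit disk and hence non-negative slope. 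After the affine substitution $\ell=(k-r)/D$, each linear factor contributes a term $v_K(k-\Omega_{r,i})$ up to a shift by $-v_K(D)$, while Lemma~\ref{l:vp poly}(2) applied to each $Q_j$ contributes a term of the form $\min(n_{r,j},d_{r,j}v_K(k-u_{r,j}))$, again up to a constant. Collecting all constant shifts from the leading coefficient, the unit $U_r$, and the appearances of $v_K(D)$ into a single integer $\lambda_r$ produces the claimed formula.

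For effectiveness, the quantities $\log(b_n^D)$ and hence all Taylor coefficients of $g_r$ are computable to arbitrary $p$-adic precision; the degree of $P_r$ is the number of zeros of $g_r$ in the closed unit disk, read off from the Newton polygon of $g_r$, which stabilizes after finitely many terms; and polynomial factorization over $K$ is algorithmic via, for instance, the Ore--Montes method. The main obstacle, alluded to as reminiscent of Schanuel's conjecture, is rigorously ruling out that $g_r$ vanishes identically on some residue class: in general this amounts to controlling $\mathbb{Q}$-linear dependencies among the $\log(b_n^D)$, a transcendence problem. In applications such as the Hecke traces considered here, nonvanishing is verified computationally by calculating sufficiently many terms, so the obstruction is avoided in practice.
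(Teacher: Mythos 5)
Your proof follows the same route as the paper's: pass to an arithmetic progression so that $f$ restricts to a $p$-adic analytic function of the index, apply Weierstrass preparation (what the paper calls Newton slope factorization) to peel off a polynomial part, and feed the irreducible factors of that polynomial into Lemma~\ref{l:vp poly}. The Teichm\"uller decomposition you use to justify the choice of $D$ is an explicit version of the paper's choice of the smallest $D$ with $b_n^D\equiv 1\pmod{\varpi^{e+1}}$, and your affine change of variables $\ell=(k-r)/D$ spells out the shift $-v_K(D)$ that the paper absorbs silently into $\lambda_r$.

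One point in your closing discussion is misattributed. The ``complication reminiscent of Schanuel's conjecture'' that the paper flags is not the possible identical vanishing of $g_r$, and $g_r\equiv 0$ is not a transcendence problem. If the values $\log(b_n^D)$ are pairwise distinct, then the Taylor coefficients of $g_r$, namely $\sum_n a_n b_n^r\bigl(\log(b_n^D)\bigr)^N$ for $N\geq 0$, vanish for all $N$ only if every $a_n b_n^r=0$ (a Vandermonde argument), which is excluded by hypothesis; when several $b_n^D$ coincide one simply merges those summands first, so whether $g_r\equiv 0$ is a finite, decidable check on the coefficients, not a question about linear dependencies of $p$-adic logarithms. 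The genuine Schanuel-like obstacle in the paper is the superconvergence phenomenon of Proposition~\ref{c:super}, which arises only in Theorem~\ref{t:vp seq} once the bases $b_i$ are allowed to have different positive valuations: there one must rule out that the valuations of the several pieces $\varpi^{\mu_i k}f_i(k)$ accidentally agree for a positive-density set of $k$. That issue simply cannot occur in Proposition~\ref{p:vp seq}, where every base is a unit, so no caveat is needed here.
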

\begin{proof}
As usual we denote
$q=|\mathcal{O}/\varpi \mathcal{O}|$ and $e=e_{K/\mathbb{Q}_p}$ for the ramification index.

Let $D$ be the smallest positive integer
such that $b_n^D\equiv 1\pmod{\varpi^\ell}$ for each $n$, where
$\ell=e+1$, in which case
$D\mid q^{\ell-1}(q-1)$. We denote $c_n\in \mathcal{O}$ such that $b_n^D=1+\varpi^\ell c_n$. If $k=Dx+r$ for some integer $x$ then
\[f(k)=\sum_{n=1}^m a_nb_n^r (b_n^D)^x=\sum_{n=1}^m a_nb_n^r
(1+\varpi^\ell c_n)^x.\]
Recall that $\log_p(1+x)$ converges absolutely when $v_p(x)>0$
and $\exp(x)$ converges absolutely when
$v_p(x)>\frac{1}{p-1}$. In the latter case, $\log_p$ and $\exp$
are mutual inverses. Since $v_p(\varpi^\ell)>1$ it follows that $v_p(\log_p(1+\varpi^\ell c_n))>1\geq \frac{1}{p-1}$ and therefore
\[(1+\varpi^\ell c_n)^x = \exp(x\log_p(1+\varpi^\ell c_n)),\]
is a $p$-adic analytic function in $x$. As a result, we see that the $p$-adic analytic function
\[f_r(x)=\sum_{n=1}^m a_nb_n^r \exp(x\log_p(1+\varpi^\ell
c_n))\]
satisfies $f(Dx+r)=f_r(x)$ for each integer $x$.

Examining the Taylor expansion of $f_r(x)$ we see that
\[f_r(x)=\sum_{N=0}^\infty \frac{x^N}{N!}\sum_{n=1}^m a_n b_n^r
\left(\log_p(1+\varpi^\ell c_n)\right)^N\in K\otimes
\mathcal{O}\ps{x},\]
since $v_p(\left(\log_p(1+\varpi^\ell c_n)\right)^N/N!)=N(v_p(\log_p(1+\varpi^\ell c_n))-1)\to \infty$ and so $f_r(x)$ converges on all of $\mathcal{O}$, since its coefficients go to 0. 
Let $d$ be the $x$-coordinate of the lowest vertex of the Newton Polygon of $f_r(x)$, and $\alpha_r$ be the coefficient of $x^d$. There
exists Newton slope factorization for $f_r(x)$, i.e., a monic polynomial
$P_r(x)\in \mathcal{O}[x]$ of degree $d$ and a power series $u_r(x)\in
1+\varpi\mathcal{O}\ps{x}$ such that $f_r(x)=\alpha_r P_r(x)u_r(x)$,
the Newton Polygon of $f_r(x)$ being the concatenation of the
Newton Polygons of $\alpha_r P_r(x)$ and $\alpha_r u_r(x)$.

For all $x\in \mathcal{O}$ we see that $v_K(u_r(x))=0$ as $u_r(x)$ converges on $\mathcal{O}$. This implies that 
\[v_K(f_r(x))=v_K(\alpha_r)+v_K(P_r(x)).\]
Factoring $P_r(x) = \prod_i (x-\omega_i) \prod R_j(x)$ where each $R_j(x)$ is irreducible of degree $d_j\geq 2$ and applying Lemma \ref{l:vp poly} immediately implies the desired formula for $v_K(f(k))$ for each $k\equiv r\pmod{D}$.

Suppose we want to compute the constants up to precision $M$. First note that there is an explicit integer $D$, computable in polynomial time, such that up to precision $M$ the power series $f_r(x)$ agrees with the truncation of $f_r(x)$ in degree $D$. Indeed, it suffices that $D(v_p(\log(1+\varpi^\ell c_n))-1)>M$ for each $n$. Then computing $f_r(x)$ requires $O(M^2(M+D)m)$ operations, the factor $M^2$ being the trivial upper bound for the running time of multiplication of long numbers. Thereafter, the effectiveness of computing the constants in the formula is equivalent to the effectiveness of Newton slope factorizations and polynomial factorizations in $K[x]$, both of which are polynomial time algorithms by $p$-adic approximation and Hensel's lemma. 

\end{proof}

Proposition \ref{p:vp seq} definitively answers the question of
what $v_p(f(k))$ is under the assumption that each exponential
base $b_i$ is a $p$-adic unit. Let us examine the general
situation. Suppose $a_1,\ldots, a_m\in K^\times$ and
$b_1,\ldots, b_m\in \mathcal{O}-\{0\}$, and let
$f(k)=\sum a_i b_i^k$. Collecting exponential bases by valuation
there exist $\mu_1<\ldots<\mu_s\in \mathbb{Z}_{\geq 0}$ and
exponential functions $f_1,\ldots, f_s$ satisfying the
hypotheses of Proposition \ref{p:vp seq} such that
\[f(k)=\sum_{i=1}^s \varpi^{\mu_i k} f_i(k).\]
Taking $D$ as the least common multiple of the moduli for
$f_1,\ldots,f_s$ we deduce that for each $i$ and $r\mod D$ there
exist constants $\lambda_{r,i}, \Omega_{r,i,j}, u_{r,i,j},
n_{r,i,j}, d_{r,i,j}$ such that
\[v_K(f_i(r+Dk))=\lambda_{r,i}+\sum_j
v_K(k-\Omega_{r,i,j})+\sum_j \min \left( n_{r,i,j}, d_{r,i,j}
v_K(k-u_{r,i,j}) \right).\]
In this nonarchimedean setting whether we can compute the
valuation of $f(k)$ depends on whether the valuations of
$\varpi^{\mu_i k} f_i(k)$ are distinct. At first glance it
seems that the linear term $\mu_i k$ dominates this valuation,
but this need not be the case.

\bigskip

\begin{definition}
We say that $\Omega\in \mathcal{O}$ is \emph{superconvergent} if $$\displaystyle \limsup \frac{v_K(k-\Omega)}{k} >0.$$ For example, the superconvergent elements of $\mathbb{Z}_2$ are precisely those with $2$-adic expansion $\displaystyle \sum_{n=0}^\infty 2^{\ell(n)}$, where $\limsup \ell(n+1)/2^{-\ell(n)}>0.$
\end{definition}

\bigskip

If none of the $\Omega_{r,i,j}$ are superconvergent then indeed
the linear term $\mu_i k$ dominates the valuation of
$\varpi^{\mu_i k} f_i(k)$. However, this is not always the
case. For example, suppose $\Omega$ is superconvergent. Then for
some base $b\in \mathcal{O}^\times$ with $v_K(b-1)$ sufficiently
large the exponential function $f(k)=-b^\Omega \cdot 1^k +
1\cdot b^k$ has root $\Omega$ which then necessarily appears in
the expression for $v_K(f(k))$.

We are grateful to Damien Roy for kindly providing us with the proof of the following proposition, which shows that superconvergent roots only occur in a local setting.

\begin{proposition}\label{c:super}
Suppose $F$ is a number field with ring of integers
$\mathcal{O}$, and let $a_1,\ldots, a_m\in F^\times$. Let
$b_1,\ldots, b_m \in \mathcal{O}-0$ such that no ratio
$b_i/b_j$ is  a root of unity for $i\neq j$. Then for each
embedding $F \hookrightarrow \overline{\mathbb{Q}}_p$, sending
each $b_j$ to a $p$-adic unit,
the exponential function $f(k)=\sum\limits_{i=1}^m a_i b_i^k$ does not have a superconvergent root.
\end{proposition}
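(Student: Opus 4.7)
The plan is to argue by contradiction. After the substitution $k\mapsto Dk+r$ from the proof of Proposition~\ref{p:vp seq}, the function $f$ extends to a $p$-adic analytic function on $\mathcal{O}$; suppose $\omega\in\mathcal{O}$ were a superconvergent root of this extension, of multiplicity $s\geq 1$. By the definition of superconvergence, there exist $c>0$ and positive integers $k_n\to\infty$ with $v_K(k_n-\omega)\geq c\,k_n$. Factoring $f(x)=(x-\omega)^s g(x)$ with $g$ analytic on $\mathcal{O}$ and $g(\omega)\neq 0$, continuity yields that $v_K(g(k_n))$ stays bounded, so $v_K(f(k_n))\geq sc\,k_n-O(1)$ for all $n$.

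The next step is to ensure that the algebraic numbers $f(k_n)\in F^\times$ are nonzero for all large $n$. The hypothesis that no ratio $b_i/b_j$ is a root of unity makes $f$ a non-degenerate exponential sum, and a standard Vandermonde argument (or Skolem--Mahler--Lech) shows that the integer zero set $\{k\in\mathbb{Z}:f(k)=0\}$ is finite; discarding finitely many indices one may assume $f(k_n)\neq 0$.

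The decisive step is to contradict the linear growth of $v_K(f(k_n))$ by invoking a theorem of Evertse (a consequence of the $p$-adic Schmidt subspace theorem): for a non-degenerate linear recurrence $u_n=\sum a_i b_i^n$ with coefficients in a number field $F$ and a finite place $v$ of $F$ at which every $b_i$ is a unit, one has $v(u_n)=o(n)$ as $n\to\infty$. Applying this at the place determined by our chosen embedding $F\hookrightarrow\overline{\mathbb{Q}}_p$ contradicts the lower bound $v_K(f(k_n))\geq sc\,k_n$ from the first step. This third step will be the main obstacle: the analogous statement for arbitrary algebraic coefficients would be of Schanuel type and remains open, as the authors remark in the paragraph preceding the proposition, so one really must exploit number-field-specific transcendence machinery, and the correct formulation of Evertse's theorem must be identified and its hypotheses verified in our setting.
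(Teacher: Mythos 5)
Your strategy is essentially the one the paper uses: both derive a linear lower bound $v_K(f(k_n))\gg k_n$ from the superconvergent root and contradict it with an $o(k)$ upper bound coming from the $p$-adic Schmidt subspace theorem. The authors apply Schlickewei's extension of the subspace theorem directly to the tuples $(b_1^{n_k},\ldots,b_m^{n_k})$: smallness of $|\sum a_jb_j^{n_k}|_w$ forces these tuples into finitely many proper subspaces, and Skolem--Mahler--Lech plus the non-root-of-unity hypothesis rules this out for infinitely many $n_k$. You instead cite Evertse's theorem on non-degenerate linear recurrences as a packaged black box; since Evertse's result is itself a consequence of the subspace theorem and internalizes precisely the Skolem--Mahler--Lech bookkeeping, this is the same route at a higher level of abstraction. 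A few small differences worth noting: the paper avoids the Weierstrass factorization $f=(x-\omega)^s g$ and gets the lower bound more directly from $\sum a_jb_j^{n_k}=-\sum a_jb_j^{n_k}(b_j^{\Omega-n_k}-1)$ together with $v_w(b_j^{\Omega-n_k}-1)>\varepsilon n_k$ (after first reducing to $|b_j-1|_w<1$ by taking integral powers, and scaling so $a_j\in\mathcal{O}$); and the non-vanishing of $f(k_n)$ is handled implicitly since the subspace theorem only concerns nonzero tuples. Your flagged concern about identifying the exact form of Evertse's theorem is legitimate caution but not a gap: the hypotheses (number-field coefficients, $b_i$ $S$-units and $w$-units, no ratio a root of unity) all hold here, and the conclusion $v_w(u_n)=o(n)$ is exactly what is needed.
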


\begin{remark}
This result can be reformulated to say that if $\Omega$ is superconvergent and $b_1,\ldots, b_m\in \overline{\mathbb{Q} }$ then the elements $b_1^{\Omega}, \ldots, b_m^{\Omega}$ (interpreted as elements in $\mathbb{C}_p$) are linearly independent over $\overline{\mathbb{Q}}$. Under this formulation Proposition \ref{c:super} is reminiscent of, albeit in no way related to, Schanuel's conjecture.
\end{remark}

\begin{proof}
We denote by $w$ the place of $F$ corresponding to the embedding
into $\overline{\mathbb{Q}}_p$. As explained in the proof of
Proposition \ref{p:vp seq} it suffices to assume that each $b_j$
satisfies $|b_j-1|_w<1$, since a suitable integral power of
$b_j$ satisfies this condition. Moreover, to simplify inequalities, we may assume that
$a_1,\ldots, a_m\in \mathcal{O}$.

The result then follows as an application of
Schlickewei's extension of Schmidt's subspace theorem
% \cite[Theorem 1D, p. 177]{schmidt}
\cite{schl}. Let $S$ be a finite set of places that
includes the archimedean places of $F$ and the place $w$ such
that each $b_j$ is an $S$-unit in $\mathcal{O}$. Furthermore, we consider the linear form $L_{w,1}(X_1,\ldots,X_m)=a_1X_1+\cdots+a_mX_m$ and for each place $v\in S$ and index $1\leq i\leq m$ we denote $L_{v,i}(X_1,\ldots,X_m)=X_i$, if $(v,i)\neq (w,1)$. Schlickewei's result implies that for each $\delta>0$ the non-zero solutions $(x_1,\dots,x_m)\in \mathcal{O}^m$ of the inequality 
	\begin{equation}\label{schl}
	\prod_ {v\in S} \prod_{i=1}^m |L_{v,i} (x_1,\dots, x_m)|_v \leq 
		\left( \max_{\substack{ 1\leq i \leq m \\ \sigma:F \hookrightarrow{} \mathbb{C}}} |\sigma(x_i)|
		\right)^{-\delta}	
    \end{equation}
are contained in a union of finitely many proper subspaces of $F^m$, where $|\cdot|_v$ is the usual norm on $F_v$, normalized to give the value $q_v^{-1}$ on uniformizers, where $q_v$ is the cardinality of the residue field.

Let us reinterpret \eqref{schl} in the case when
$(x_1,\dots,x_m)=(b_1^k,\dots,b_m^k)$ for a positive integer
$k$. The assumption that each $b_j$ is an $S$-unit implies that
$\prod\limits_{v\in S}|b_j|_v=1$, as the adelic norm is 1 on
$F^\times$ and $|b_1|_w=1$ by hypothesis. Therefore the left
hand side of \eqref{schl} is
$|a_1b_1^k+\cdots+a_mb_m^k|_w$. Since not all $b_i$ are roots of
unity (this follows from hypothesis if $m\geq 2$, and the
statement is trivially true when $m=1$) it follows that
$C=\displaystyle \max_{\substack{ 1\leq i \leq m \\ \sigma:F
    \hookrightarrow{} \mathbb{C}}} |\sigma(b_i)|>1$. Thus inequality
\eqref{schl} becomes
\[|a_1b_1^k+\cdots+a_mb_m^k|_w<C^{-k \delta}.\]

Suppose now that $f(k)$ has a superconvergent root $\Omega$. By
definition there exists $\varepsilon>0$ and a sequence of
positive integers $(n_k)$ such that $v_p(\Omega-n_k)>\varepsilon
n_k$ for $k\gg 0$. Note that for each $j$ and $k$,
$v_p(b_j^{\Omega-n_k}-1)=v_p(b_j-1)+v_p(\Omega-n_k)>\varepsilon
n_k$ and so
\[|\sum_{j=1}^m a_j b_j^{n_k}|_w = |\sum_{j=1}^m a_j b_j^{n_k}
(1-b_j^{\Omega-n_k})|_w<q_w^{- \varepsilon n_k}.\]
Choosing $\delta>0$ such that $C^{-\delta}=q_w^{-\varepsilon}$
implies that the tuples $(b_1^{n_k},\ldots, b_m^{n_k})$ lies in
finitely many proper subspaces of $F^m$, for $k\gg 0$.

Suppose $c_1x_1+\cdots c_m x_m=0$ is one of these proper subspaces. 
The Skolem-Mahler-Lech Theorem  \cite[Theorem 2.1]{Skolem}
implies that if $(b_1^{n_k},\ldots, b_m^{n_k})$ lies on this subspace then $n_k$ belongs to a finite set or a finite union of arithmetic progressions. The assumption that $b_i/b_j$ is not a root of unity implies that $\sum c_j b_j^r=0$ cannot be satisfied for $r$ in arithmetic progressions and therefore the equation holds only for finitely many $n_k$, yielding the desired contradiction.
\end{proof}

We now turn to the general setting of exponential sums. Suppose $F$ is a number field with ring of integers
$\mathcal{O}$ and $K/\mathbb{Q}_p$ is a finite extension containing $F$.
\begin{theorem}\label{t:vp seq}
Suppose $a_1,\ldots, a_m\in F^\times$ and
$b_1,\ldots, b_m\in \mathcal{O}-\{0\}$, such that $b_i/b_j$ is not a root of unity for $i\neq j$. Let $\mu=\min v_K(b_i)$. There there exists an integer $D$ and for
each class $r\pmod{D}$ there exist $\lambda_r\in \mathbb{Z}$,
 and possibly empty collections of $\Omega_{r,i}, u_{r,j}\in \mathcal{O}$, $n_{r,j}\in \mathbb{Z}_{\geq 0}$, and $d_{r,j}\in \mathbb{Z}_{\geq 2}$ such that
 \[
 	v_K(f(r+Dk)) = \mu k + \lambda_r+\sum_i v_K(k-\Omega_{r,i})+\sum_j\min(n_{r,j}, d_{r,j} v_K(k-u_{r,j})).
 \]
for each $k$ large enough. Moreover, all the constants above are effectively computable to arbitrary precision in polynomial time.
 		
%\begin{enumerate}
%\item If Conjecture \ref{c:super} is true then for each $k$ large enough:
%\[v_K(f(r+Dk)) = \mu k + \lambda_r+\sum_i v_K(k-\Omega_{r,i})+\sum_j\min(n_{r,j}, d_{r,j} v_K(k-u_{r,j})).\]
%\item Without assuming the conjecture, if $a$ and $t$ are
%integers such that $t\geq \max v_K(a-\Omega_{r,i})$ for all $i$
%then the above formula holds for each $k$ large
%enough with $v_K(k-a)>t$.
%\item For each index $i$ choose any integer
%$t_i\geq \max_jv_K(\Omega_{r,i}-\Omega_{r,j})$. Then
%\[v_K(f(r+Dk))\geq \mu k + \lambda_i + t_i + \sum_{j\neq i} v_K(\Omega_{r,i}-\Omega_{r,j}) + \sum \min(n_{r,j}, d_{r,j} v_K(k-u_{r,j}))\]
%for all sufficiently large integers $k$ such that $v_K(k-\Omega_{r,i})>t_i$. 
%\end{enumerate}
%Moreover, all the constants above are effectively computable to arbitrary precision in polynomial time.
\end{theorem}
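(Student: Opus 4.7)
The plan is to reduce to Proposition~\ref{p:vp seq} by decomposing $f$ according to the $v_K$-valuations of the bases, and then to use Proposition~\ref{c:super} to show that the term of smallest $v_K$-valuation dominates for all sufficiently large $k$.

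First, I would group the summands of $f(k) = \sum a_i b_i^k$ by the valuations of their bases. Let $\mu = \mu_1 < \mu_2 < \cdots < \mu_s$ be the distinct values of $v_K(b_i)$, set $I_\ell = \{i : v_K(b_i) = \mu_\ell\}$, and write $F_\ell(k) = \sum_{i \in I_\ell} a_i b_i^k$, so that $f(k) = \sum_\ell F_\ell(k)$. For a fixed choice of index $i_\ell \in I_\ell$, factor $F_\ell(k) = b_{i_\ell}^k\, G_\ell(k)$ where $G_\ell(k) = \sum_{i \in I_\ell} a_i (b_i/b_{i_\ell})^k$ now has bases in $\mathcal{O}_K^\times$. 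Proposition~\ref{p:vp seq} then applies to each $G_\ell$ and supplies an integer $D_\ell$ together with, for every $r \pmod{D_\ell}$, a formula of the advertised shape for $v_K(G_\ell(k))$.

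Next I would show that $v_K(G_\ell(k)) = o(k)$. The $\min$-terms in the expression for $v_K(G_\ell)$ are bounded by the constants $n_{r,\ell,j}$, so the only way for linear growth to occur is through a superconvergent $\Omega_{r,\ell,i}$. This is where Proposition~\ref{c:super} enters: by enlarging the set $S$ to include all archimedean places, the place $w$ of $F$ under $v_K$, and every finite place at which some $b_j$ has nonzero valuation, the ratios $b_i/b_{i_\ell}$ become $S$-units of $F$ that are $v_K$-units under our embedding. A rereading of the Schlickewei argument in the proof of Proposition~\ref{c:super} shows that only the $S$-unit property of the bases is required, not that they lie in $\mathcal{O}_F$, so it applies verbatim to $G_\ell$ and rules out any superconvergent root. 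Consequently $v_K(F_\ell(k)) = \mu_\ell k + o(k)$.

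Finally, setting $D = \operatorname{lcm}(D_1,\ldots,D_s)$ and fixing a residue class $r \pmod D$, for each $\ell \geq 2$ the difference $v_K(F_\ell(r+Dk)) - v_K(F_1(r+Dk)) = (\mu_\ell - \mu)(r+Dk) + o(k)$ tends to $+\infty$, so the ultrametric inequality yields $v_K(f(r+Dk)) = v_K(F_1(r+Dk))$ for all sufficiently large $k$. Substituting the Proposition~\ref{p:vp seq} formula for $v_K(G_1)$ and absorbing the constant $\mu r$ into $\lambda_r$ produces the claimed expression, with effectiveness inherited from both ingredients. The main obstacle is the technical point of extending Proposition~\ref{c:super} beyond $\mathcal{O}_F$-integral bases to $S$-units; this is essentially bookkeeping, since the subspace-theorem argument never really used integrality.
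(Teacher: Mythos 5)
Your proof follows essentially the same route as the paper: group the terms of $f$ by the valuation $\mu_\ell$ of their bases, apply Proposition~\ref{p:vp seq} to each residual unit-base exponential sum, and invoke Proposition~\ref{c:super} to show the lowest-valuation block dominates for $k$ large, with the leftover constant $\mu r$ absorbed into $\lambda_r$. The one place you are more careful than the paper is in noting that the bases $b_i/b_{i_\ell}$ of $G_\ell$ are $S$-units in $F$ rather than elements of $\mathcal{O}_F$, so Proposition~\ref{c:super} requires the (routine, e.g.\ by clearing denominators back to the integral $b_i\in I_\ell$) extension you describe; the paper extracts $\varpi^{\mu_\ell k}$ instead of $b_{i_\ell}^k$, which has the analogous mismatch with the hypotheses of Proposition~\ref{c:super}, and silently appeals to the same extension.
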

\begin{proof}
As mentioned above, if we order $\mu_1<\ldots<\mu_s$ the distinct valuations of the exponential bases then $v_K(f(k))=v_K(\sum \varpi^{\mu_i k}f_i(k))$
where for $k\equiv r\pmod{D}$:
\[v_i(k)=v_K(\varpi^{\mu_i k}f_i(k))=\mu_{r,i}k + \lambda_{r,i}+\sum_j
v_K(k-\Omega_{r,i,j}) + \sum_j\min(n_{r,i,j},
d_{r,i,j}v_K(k-u_{r,i,j})).\]
We choose $\mu=\mu_1$, $\lambda_r=\lambda_{r,1}$, $\Omega_{r,j}=\Omega_{r,1,j}$, etc. 

Using Proposition \ref{c:super} it follows that $v_i(k)-\mu_i
k = O(\sum_j v_K(k-\Omega_{r,i,j})) = o(k)$ and therefore for
$k$ large enough $v_1(k)<v_i(k)$ for all $i>1$. We conclude that
$v_K(f(k))=v_1(k)$ as desired.

%If we do not assume Conjecture \ref{c:super} but require that $v_K(k-a)>t$ then we simply note that the hypothesis on $t$ implies that for each $j$, $v_K(k-\Omega_{r,j})=v_K(a-\Omega_{r,j})$ is fixed and we conclude that $v_1(k)-\mu k = O(1)$. Again we deduce that $v(f(k))=v_1(k)$ and in fact more is true, namely that we may replace each $v_K(k-\Omega_{r,j})$ with the constant $v_K(a-\Omega_{r,j})$. This allows us to compute the valuation of $f(k)$ not only for specific $k$ but for entire $p$-adic balls, as long as their radii are sufficiently small, a fact we will exploit in our explicit computations in the last section.

%Finally, the last part comes from the fact that for each fixed $t_i$ the RHS of the inequality is a lower bound for $v_1(k)$ and grows slower than any $v_j(k)$ for $j>1$.

\end{proof}

\begin{remark}
%Assuming Conjecture \ref{c:super}, 
Theorem \ref{t:vp seq} yields a computable expression for the valuations of exponential sums. However, we cannot make effective the condition that $k$ be large enough. %Nevertheless, parts (2) and (3) of the theorem can be made effective in computations for sufficiently small balls.
\end{remark}

\section{Traces of Hecke operators}\label{s:hecke}
As discussed in the introduction, Maeda's conjecture is
intimately related to the characteristic polynomials
$P_{T_p}(X)=\det(1-T_pX|S_{2k})$. In this section we turn our
attention to algorithmically computing the Newton polygon of
$P_{T_p}(X)$, which we implement in the last section for $T_2$.

First, we note that if $S_{2k}$ has dimension $m$ then $$P_{T_p}(X)=\sum\limits_{n=0}^m (-1)^n X^n \Tr(\wedge^n T_p|S_{2k}).$$ To compute the trace of $\wedge^n T_p$ we follow the usual method of expressing elementary symmetric polynomials in terms of power sums by means of exponential Bell polynomials. The exponential Bell polynomial $B_n(x_1,\ldots,x_n)\in \mathbb{Z}[x_1,\ldots, x_n]$ has degree $n$ and $p(n)$ monomial terms, where $p(n)$ is the partition function, and we compute:
\[\Tr(\wedge^n T_p|S_{2k}) = \frac{(-1)^n}{n!}B_n(-0!\cdot\Tr(T_{p}|S_{2k}), -1!\Tr(T_{p}^2|S_{2k}),\ldots, -(n-1)!\Tr(T_{p}^n|S_{2k})).\]
Using the definition of $T_{p^n}$ and the Newton identities, we see that:
\[\Tr (T_p^n|S_{2k}) = \sum_{i=0}^{\lfloor
  n/2\rfloor}\left(\binom{n}{i}-\binom{n}{i-1}\right)p^{(k-1)i}\Tr(T_{p^{n-2i}}|S_{2k})=
\Tr(T_{p^n}|S_{2k})+O(p^{k-1}),\]
which implies that $\Tr(\wedge^n T_p|S_{2k}) = \frac{(-1)^n}{n!}B_n(\ldots,-(i-1)!\cdot\Tr(T_{p^i}|S_{2k}),\ldots)+O(p^{k-1})$. Here $A(k)=\mathcal{O}(B(k))$ for two $p$-adic functions $A$ and $B$ means that $|A(k)|_p \ll |B(k)|_p$ for $k\gg 0$.

\bigskip

Finally, we may compute the traces of $T_{p^n}$ using the
Eichler-Selberg trace formula, which expresses the trace of the
Hecke operator in terms of Hurwitz class numbers. Since it
will be necessary later, we recall here the definition of the
Hurwitz class number $H(m)$ (as in \cite[p. 306]{knightly-li:traces}). For a positive integer $m\equiv
0,3\pmod{4}$ we denote $E_m = \mathbb{Q}(\sqrt{-m})$ with
discriminant $-d_m$ in which case we may write $m=a^2d_m$
for a positive integer $a$. We define $\legendre{E_m}{\ell}$ to
be $\legendre{-d_m}{\ell}$ if $\ell$ is odd, and if $\ell=2$ then either 0 if $d_m$ is even or $(-1)^{(d+1)/4}$ otherwise. For an integer $a$ we denote
$$\varphi_m(a)=\displaystyle a\prod_{\ell\mid
  a}\left(1-\legendre{E_m}{\ell}\frac{1}{\ell}\right),$$ in which case the Hurwitz class number is 
  
$$H(m)=\frac{h_{m}}{w}\sum_{a^2\mid m/d_m}\varphi_m(a)$$

where $h_m$ is the class number of $E_m$, and 

$$w=
\begin{cases}
2 \text{ if }m=4, \\
3 \text{ if }m=3, \\
1 \text{ otherwise.}  \\
\end{cases}
$$

In the case of the Hecke operator $T_{p^n}$ the Eichler-Selberg trace formula \cite[p. 370]{knightly-li:traces} implies:
\[\Tr(T_{p^n}|S_{2k})=-\frac{1}{2}\sum_{i=0}^n p^{(2k-1)\min(i,n-i)}-\frac{H(4p^n)(-p^n)^{k-1}}{2}-\sum_{1\leq t<2p^{n/2}}H(4p^n-t^2)\frac{\rho_t^{2k-1}-\overline{\rho}_t^{2k-1}}{\rho_t-\overline{\rho}_t}.\]
Here  $\rho_t,\overline{\rho}_t$ are the roots $\dfrac{t\pm \sqrt{t^2-4p^n}}{2}$ of $X^2-tX+p^n=0$, chosen such that $v_p( \rho_t) \leq v_p( \overline{\rho}_t )$.  

An immediate consequence is that $\Tr(\wedge^n T_p|S_{2k})$ is an exponential sum of the type considered in the previous section and, by Theorem \ref{t:vp seq}, to compute $v_p(\Tr(\wedge^nT_p|S_{2k}))$ it suffices to consider only the exponentials with $p$-adic unit bases.

\begin{lemma} \label{l:hecke}
Let $n$ be a positive integer and $p$ a prime number. Then
\begin{align*}
\Tr(T_{p^n}|S_{2k})&= -1-\sum_{1\leq t<2p^{n/2}, p\nmid
                  t}\frac{H(4p^n-t^2)}{\sqrt{t^2-4p^n}}\rho_t^{2k-1}+O(p^{k})\\
\Tr(T_{2}|S_{2k})&= -1-\frac{1}{\sqrt{-7}}\rho_1^{2k-1} +O(2^{k}).
\end{align*} 
\end{lemma}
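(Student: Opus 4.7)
The plan is to extract the $p$-adically dominant part of the Eichler-Selberg trace formula stated above the lemma, by separately bounding each of its three summands and then specializing to $p=2$, $n=1$.

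First I would dispose of the two easy summands. In the geometric piece $-\frac{1}{2}\sum_{i=0}^n p^{(2k-1)\min(i,n-i)}$, only the indices $i=0$ and $i=n$ satisfy $\min(i,n-i)=0$, each contributing $1$; together they produce the $-1$ in the lemma, while all remaining indices contribute $O(p^{2k-1})=O(p^k)$. The term $-\frac{1}{2}H(4p^n)(-p^n)^{k-1}$ carries the factor $p^{n(k-1)}$ and is therefore $O(p^k)$ whenever $n\geq 1$.

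For the elliptic sum I would split the summation range according to whether $p\mid t$, using $\rho_t\overline{\rho}_t=p^n$, $\rho_t+\overline{\rho}_t=t$, and the convention $v_p(\rho_t)\leq v_p(\overline{\rho}_t)$. When $p\mid t$ both valuations are positive and sum to $n$, so $v_p(\rho_t)\geq 1/2$ (the ramified case); the expansion
\[\frac{\rho_t^{2k-1}-\overline{\rho}_t^{2k-1}}{\rho_t-\overline{\rho}_t}=\sum_{j=0}^{2k-2}\rho_t^{2k-2-j}\overline{\rho}_t^{j}\]
has every summand of $p$-adic valuation at least $(2k-2)v_p(\rho_t)\geq k-1$, so the whole contribution is $O(p^k)$. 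When $p\nmid t$, we have $v_p(\rho_t)=0$ and $v_p(\overline{\rho}_t)=n$, and the fraction splits as
\[\frac{\rho_t^{2k-1}}{\sqrt{t^2-4p^n}}-\frac{\overline{\rho}_t^{2k-1}}{\sqrt{t^2-4p^n}};\]
here $v_p(t^2-4p^n)=0$ (trivial for odd $p$; for $p=2$ it follows from $t^2\equiv 1\pmod{8}$ when $t$ is odd), so the second term has valuation at least $n(2k-1)\geq 2k-1$ and is $O(p^k)$, while the first is exactly the surviving contribution in the lemma.

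Combining these estimates yields the first identity. For the specialization to $T_2$ (so $p=2$, $n=1$), the constraint $1\leq t<2\sqrt{2}$ with $t$ odd forces $t=1$, and a direct computation from the definition gives $H(7)=1$ (since $\mathbb{Q}(\sqrt{-7})$ has class number one, $d_7=7$, and $w=1$), so with $\sqrt{1-8}=\sqrt{-7}$ we obtain the displayed formula. The only mildly delicate point is the $v_2$ estimate for $\sqrt{t^2-4\cdot 2^n}$ when $p=2$; every other step is routine trace-formula bookkeeping.
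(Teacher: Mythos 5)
Your proof is correct and takes essentially the same approach as the paper: both arguments isolate the $i=0,n$ terms of the geometric piece, discard the $H(4p^n)$ term trivially, and then determine which of $\rho_t,\overline{\rho}_t$ survives to precision $O(p^k)$ by computing their $p$-adic valuations (the paper reads these off the Newton polygon of $X^2-tX+p^n$, while you use $\rho_t\overline{\rho}_t=p^n$ together with $v_p(t^2-4p^n)=0$ for $p\nmid t$). Your telescoping expansion $\frac{\rho_t^{2k-1}-\overline{\rho}_t^{2k-1}}{\rho_t-\overline{\rho}_t}=\sum_{j}\rho_t^{2k-2-j}\overline{\rho}_t^{j}$ in the $p\mid t$ case is a slightly more explicit version of the paper's remark that raising to the exponent $2k-1$ eliminates those contributions, but the underlying estimate is the same.
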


\begin{proof}
Note that for every prime $p$: $$\displaystyle
\frac{1}{2}\sum_{i=0}^np^{(k-1)\min(i,n-i)} =
1+O(p^{k}).$$ 

For the first identity it suffices to check that only the
above mentioned roots of $X^2-tX+p^n=0$ remain. If $p$ is odd,
the bound $t<2p^{n/2}$ implies that $v_p(t)<n/2$ and so the
Newton Polygon of this equation consists of two segments with
negative slope, one of which equals $-v_p(t)$. We conclude that
both roots are in $\mathbb{Z}_p$ with valuations $v_p(t)$ and
$n-v_p(t)$. In fact, $\rho_t$ has valuation $v_p(t)$ and
$\overline{\rho}_t$ has valuation $n-v_p(t)$. Since these roots
are raised to the exponent $2k-1$ we may remove all except
$\rho_t$ with $p\nmid t$.

When $p=2$ the equation is $X^2-2^\ell u X+2^{n}=0$, where $t=2^\ell u$ for an odd number $u$. A priori we know that $2\ell <n+2$. If $2\ell < n$ then the Newton Polygon consists again of two segments, with slopes $\ell$ and $n-\ell$. Otherwise it consists of a single segment with slope $n/2$. If $n>1$ then $n-\ell>0$ and so we may eliminate the roots $\overline{\rho}_t$ and keep $\rho_t$ for odd $t$. When $n=1$ the above expression is immediate by inspection.

\end{proof}

Turning back to $f(k)=\Tr(\wedge^n T_p|S_{2k})$ we see that the
exponential sum
\[f(k) = \frac{(-1)^n B_n(0!,1!,\ldots, (n-1)!)}{n!} + \sum a_i b_i^k\]
has the following properties:
\begin{enumerate}
\item $B_n(0!,1!,\ldots, (n-1)!)\neq 0$ (the coefficients are all
nonnegative) and
\item The remaining bases $b_i$ are not roots of unity. Indeed, each base $b_i$ is of the form $b_i = \prod \rho_{j,
  t_j}^{e_j}$ where $\rho_{j, t_j}$ is a root of
$X^2-t_jX+p^j=0$. However, each root $\rho_{j,t_j}$ has norm $p^j$ which means that $b_i$ has non-unit norm.
\end{enumerate}

We now apply the results of the previous section to Hecke traces. In this case we may choose $D=2$ and $r=0$, since level one modular forms have even weights.

\begin{corollary}\label{c:wedge n}
For each $n\geq 1$ there exists an integer $\lambda_n\in \mathbb{Z}$ and possibly empty collections $\Omega_{i}\in \mathbb{Z}_2$ and integers $n_{j}\geq 0, d_{j}\geq 2, u_{j}\geq 0$ such that

\begin{equation}\label{eq:np}v_2(\Tr(\wedge^n T_2|S_{2k}))=\lambda_n +\sum_i
v_2(k-\Omega_{i})+\sum_j\min(n_{j}, d_{j} v_2(k-u_{j})),
\end{equation}
for all large enough $k$. %either assuming Conjecture \ref{c:super} or assuming that $k$ varies in a sufficiently small $2$-adic ball not centered around any $\Omega_j$. Not assuming Conjecture \ref{c:super} if $k$ varies in a sufficiently small ball around $\Omega_{r,i}$ for some $i$ one has an explicit lower bound on $v_2(\Tr(\wedge^n T_2|S_{2k}))$.
\end{corollary}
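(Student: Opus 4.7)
The plan is to express $\Tr(\wedge^n T_2|S_{2k})$ as a finite exponential sum in $k$ and then apply Theorem~\ref{t:vp seq}.

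First, I would combine the Bell polynomial identity from the paragraphs preceding the corollary, the relation $\Tr(T_2^i|S_{2k}) = \Tr(T_{2^i}|S_{2k}) + O(2^{k-1})$, and Lemma~\ref{l:hecke} to obtain a representation
\[
\Tr(\wedge^n T_2|S_{2k}) = c_0 + \sum_\alpha c_\alpha \beta_\alpha^k + O(2^{k-1}),
\]
where $c_0 = \frac{(-1)^n}{n!} B_n(0!,1!,\ldots,(n-1)!) \neq 0$, each $c_\alpha$ lies in a fixed number field $F$, and each exponential base $\beta_\alpha$ is a monomial in the values $\rho_{i,t}^2$ (after rewriting $\rho_{i,t}^{2k-1} = \rho_{i,t}^{-1}(\rho_{i,t}^2)^k$).

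Next, I would verify the hypotheses of Theorem~\ref{t:vp seq}. As explained in the proof of Lemma~\ref{l:hecke}, each $\rho_{i,t}$ with $i \geq 1$ is a $2$-adic unit, so every $\beta_\alpha$ is a $2$-adic unit, which forces $\mu = 0$ in the theorem. Moreover, $N_{F/\mathbb{Q}}(\beta_\alpha)$ is a positive power of $2$ rather than $\pm 1$, so no individual $\beta_\alpha$ is a root of unity. The stronger pairwise ratio condition, namely that $\beta_\alpha/\beta_{\alpha'}$ is not a root of unity for $\alpha \neq \alpha'$, is arranged by collecting together any bases that differ by a root of unity; since level-one modular forms have even weights, only the residue class $r = 0 \pmod{D = 2}$ is relevant, which absorbs the $\pm 1$-type collisions and leaves a single formula in $k$.

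Applying Theorem~\ref{t:vp seq} with $\mu = 0$ then yields \eqref{eq:np} directly, with $\lambda_n$ and the collections $\Omega_i$, $(n_j, d_j, u_j)$ inherited from the theorem. To finish, the leftover $O(2^{k-1})$ error has $v_2 \geq k - 1$ (growing linearly in $k$), whereas the right-hand side of \eqref{eq:np} grows at most logarithmically in $k$ away from superconvergent values (which are controlled by Proposition~\ref{c:super}), so the error cannot affect the $2$-adic valuation for $k \gg 0$.

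In my view, the main obstacle is the verification of the pairwise non-root-of-unity condition after the Bell polynomial expansion. Ruling out accidental relations $\beta_\alpha/\beta_{\alpha'}$ equal to a nontrivial root of unity requires understanding multiplicative dependencies among the quadratic algebraic integers $\rho_{i,t}$; the restriction to the even-weight residue class mod $2$ should handle the $\pm 1$ ambiguity, but a careful case-by-case or Galois-theoretic argument is needed for a fully rigorous justification.
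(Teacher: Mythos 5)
Your proposal is essentially the same as the paper's: write $\Tr(\wedge^n T_2|S_{2k})$ as an exponential sum via the Bell-polynomial identity and Lemma~\ref{l:hecke}, observe the constant term $\frac{(-1)^n}{n!}B_n(0!,\ldots,(n-1)!)$ is nonzero, observe the non-constant bases have non-unit norm (so are not roots of unity), take $\mu=0$, and invoke Theorem~\ref{t:vp seq}; the paper also picks $D=2$, $r=0$ because level-one weights are even. You spell out the handling of the $O(2^{k-1})$ error more explicitly, which is a good instinct and matches the logic underlying Theorem~\ref{t:vp seq}.

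Regarding the obstacle you flag: the worry about pairwise ratios $\beta_\alpha/\beta_{\alpha'}$ being roots of unity is a legitimate observation, and the paper's stated justification (non-unit norm) only rules out a single $b_i$ being a root of unity, not a ratio $b_i/b_j$ with $i,j$ both nonconstant. However, no Galois-theoretic analysis is required to close this. The key point is that Proposition~\ref{p:vp seq} has no pairwise-ratio hypothesis at all, and the object it actually operates on is the $p$-adic analytic germ $f_r(x)$, which depends only on the function $k\mapsto f(k)$ on the residue class $r\Mod D$, not on how one chooses to present it as a sum of exponentials. If two bases satisfy $\beta_\alpha/\beta_{\alpha'}=\zeta$ with $\zeta$ of order $N\mid D$, then on each class $r\Mod D$ the two terms merge into a single exponential with base $\beta_\alpha$ and a possibly new coefficient; one simply reindexes the sum. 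After this reindexing no ratio is a root of unity, the sum is nonempty because the constant term $c_0\neq 0$ can never merge with a non-unit-norm base, and Proposition~\ref{c:super} applies to the reindexed presentation to rule out superconvergent $\Omega_i$. So the condition is ``absorbed'' by the modulus $D$ already present in Theorem~\ref{t:vp seq}, and the only essential non-vanishing input is the one the paper emphasizes, namely $B_n(0!,\ldots,(n-1)!)\neq 0$.
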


\begin{proof}
This follows from Theorem \ref{t:vp seq} since the
exponential series for $\Tr(\wedge^n T_2|S_{2k})$ is not the 0 power series, as explained above.

\end{proof}

\section{Computations of valuations of Hecke traces} \label{refine}
From an algorithmic perspective Corollary \ref{c:wedge n} has a
major flaw: while the constants in Theorem \ref{t:vp seq} are computable in polynomial time, the running time is a polynomial in the number of exponentials, which in the case
of traces of Hecke operators is on the order of $O(2^{n/2})$. Note that even if the number of terms in the Eichler-Selberg trace formula were smaller, the exponential Bell polynomials have exponentially many monomials, which means any implementation of Theorem \ref{t:vp seq} for $\Tr(\wedge^nT_2|S_{2k})$ would have to be exponential in $n$.

In this section we explain our implementation of Corollary \ref{c:wedge n} with an eye towards speeding up this exponential time algorithm.
We begin with an observation. Suppose $E_1(k),\ldots, E_n(k)$ are exponential sums over $\mathbb{Z}_2$ and $B(X_1,\ldots, X_n)\in\frac{1}{T} \mathbb{Z}[X_1,\ldots,X_n]$ for some $T\in \mathbb{Z}_2$. If $f_{r,1}(k),\ldots, f_{r,n}(k)$ are the power series associated with the exponential sums $E_1,\ldots, E_n$ for $k\equiv r\pmod{D}$ then $B(f_{r,1}(k),\ldots, f_{r,n}(k))$ is the power series associated with $B(E_1,\ldots, E_n)$. This computation is polynomial in the precision of $\mathbb{Z}_2$, of $\mathbb{Z}_2\ps{k}$, and of the number of monomials in $B$. Moreover, if $g_{r,i}\equiv f_{r,i}\pmod{2^M}$ then $$B(g_{r,1},\ldots, g_{r,n})\equiv B(f_{r,1},\ldots, f_{r,n})\pmod{2^{M-v_2(T)}}$$ which means that instead of approximating the exponential sum $B(E_1,\ldots, E_n)$ with power series to precision $N$ it suffices to approximate each $E_i$ to precision $M+v_2(T)$.

Let us turn to making more efficient the computations of power
series attached to the exponential sums in
$\Tr(T_{2^n}|S_{2k})$, which can then be used to compute the
power series attached to $\Tr(\wedge^n T_2|S_{2k})$ as in the
previous paragraph. A priori the computation involves $2^{n/2}$
exponentials:
\[\Tr(T_{2^n}|S_{2k})=-1-\sum_{1\leq t<2^{n/2+1},2\nmid t}\frac{H(2^{n+2}-t^2)}{\sqrt{2^{n+2}-t^2}}\rho_t^{2k-1}+\mathcal{O}(p^k).\]
We are able to make this computation more efficient by
controlling the precision of computations. Suppose we seek the
power series $f(k)$ attached to the exponential sum above up to
precision $M$. We compute
\begin{align*}
\Tr(T_{2^n}|S_{2k})&=-1-\sum_{0\leq u<2^{n/2}}\sum_{m=0}^\infty
  \frac{k^m}{m!}\frac{H(2^{n+2}-(1+2u)^2)}{\rho_{1+2u}\sqrt{2^{n+2}-(1+2u)^2}}\left(\log_2(\rho_{1+2u}^2)\right)^m\\
  &=-1-\sum_{0\leq u<2^{n/2}}\sum_{m=0}^\infty
  \frac{k^m}{m!}H(2^{n+2}-(1+2u)^2)\sum_{r=0}^\infty a_{n,m,r}u^r
\end{align*}
where the coefficients $a_{n,m,r}$ can be computed from the
Taylor expansion around 0. They satisfy $v_2(a_{n,m,r})\geq
2m$ for all $r$ and, for any $\alpha<1$, $v_2(a_{n,m,r})\geq
2m+\alpha r$ for $r\geq r_\alpha = \lceil \frac{4}{1-\alpha}\rceil + \frac{2 \alpha}{1-\alpha}\lceil \frac{2}{1-\alpha}\rceil$. Indeed, this can be
seen as follows. As a power series in $u$ we have
$\log_2(\rho_{1+2u}^2)\in 2^2 \mathbb{Z}_2\ps{u}$. Factoring
$2^2$ we obtain the term $2m$ in the lower bound on
valuation. Let us turn to the growth of valuations of the Taylor coefficients of
$q(u)=(2^{-2}\log_2(\rho_{1+2u}^2))^m/(\rho_{1+2u}\sqrt{2^{n+2}-(1+2u)^2})$, or equivalently to the sizes of slopes of the Newton polygon of $q(u)$. Any root of $q(u)$ would satisfy $\rho_{1+2u}=\zeta$, where $\zeta$ is a root
of unity of order $2^b$ for some $b$. But then $1+2u=\zeta +
2^{n+2}\zeta^{-1}$ and so $u=(\zeta-1)/2 + 2^{n+1}\zeta^{-1}$
which has valuation $1/2^{b-1}-1$. We conclude that the number of slopes of the Newton polygon of $q(u)$ which are $\leq \lambda$ for some $\lambda <1$ is at most $\lceil\frac{2}{1-\lambda}\rceil$, since they correspond to roots of unity with $1-1/2^{b-1} \leq \lambda$. The inequality $v_2(a_{n,m,r})\geq 2m+\alpha r$ for $r\geq r_\alpha$ then follows by computing a lower bound on the Newton function of $q(u)$ partitioning the Newton slopes up to $r_\alpha$ in the intervals $[0,\alpha]$, $[\alpha,(1+\alpha)/2]$ and $[(1+\alpha)/2,\infty)$.

Fix $\alpha<1$. The fact that $v_2(a_{n,m,r}/m!)\geq m$ for all $r$ and $\geq m+\alpha r$ for $r\geq r_\alpha$ implies that up to precision $M$:
\begin{align*}
\Tr(T_{2^n}|S_{2k})&\equiv -1-\sum_{\substack{r_\alpha\leq r<(M-m)/\alpha\\ \textrm{ or }  r<r_\alpha,m<M}}\frac{a_{n,m,r}k^m}{m!}\sum_{0\leq u<2^{n/2}}H(2^{n+2}-(1+2u)^2)u^r\pmod{2^M}.
\end{align*}
It is therefore necessary and sufficient to compute the sums
\[\mathscr{H}^{\odd}_{r}(2^n) = \sum_{1\leq t<2^{n/2+1}, 2\nmid t}H(2^{n+2}-t^2)t^r\]
for $r\leq M/\alpha$. Such sums have long been studied in
connection with Fourier coefficients of modular forms \cite{lygeros-rozier,hurwitz-congruence}.

In the remainder of this section we will explain how to make the
computation of $\mathscr{H}^{\odd}_r(2^n)$ faster. We begin with a result that states that it suffices to compute $$\mathscr{H}_r(2^n)=\displaystyle \sum_{1\leq t<2^{n/2+1}}H(2^{n+2}-t^2)t^r.$$
 
\begin{lemma}\label{l:hurwitz odd sums}
We have
\begin{align*}
\mathscr{H}_r^{\odd}(2^{2m})
  &=\mathscr{H}_r(2^{2m})-2^{r+1}\mathscr{H}_r(2^{2(m-1)})-2^{rm+1}/3\\
\mathscr{H}_r^{\odd}(2^{2m-1})
  &=\mathscr{H}_r(2^{2m-1})-2^{r+1}\mathscr{H}_r(2^{2(m-1)-1})-2^{rm-1}.
\end{align*}
\end{lemma}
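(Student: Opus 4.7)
Plan: Both identities follow from splitting the sum over $t$ by parity and applying a Hurwitz class number identity to the even part. Writing $\mathscr{H}_r^{\odd}(2^n) = \mathscr{H}_r(2^n) - \mathscr{H}_r^{\operatorname{even}}(2^n)$ and substituting $t = 2u$ in the even part gives
\[
\mathscr{H}_r^{\operatorname{even}}(2^n) = 2^r \sum_{1 \leq u < 2^{n/2}} H\bigl(4(2^n - u^2)\bigr) u^r,
\]
and this range for $u$ coincides exactly with the range of $t$ in $\mathscr{H}_r(2^{n-2})$. The goal is then to apply an identity of the form $H(4M) = 2H(M) + \varepsilon(M)$ and show that the error concentrates at a single value of $u$.

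The Hurwitz class number identity I would use is
\[
H(4M) - 2H(M) = \bigl(1 - \chi_{-d_0}(2)\bigr)\, H(M_0),
\]
where $M = 2^{2j} M_0$ with $M_0$ the odd part and $-d_0$ the fundamental discriminant of $-M_0$. This is derived from the standard formula $h(-d_0 f^2) = \frac{f\, h(-d_0)}{[O_K^\times : O_f^\times]} \prod_{p \mid f}\bigl(1 - \chi_{-d_0}(p)/p\bigr)$ by tracking the additional divisors of $2^{j+1} c_0$ (where $M_0 = d_0 c_0^2$, $c_0$ odd) that appear in $H(4M)$ but not in $H(M)$. The degenerate case $M_0 = 1$ (i.e., $M$ a pure power of $4$) is handled separately via the closed form $H(4^k) = (2^k-1)/2$, which yields $H(4^{k+1}) - 2H(4^k) = 1/2$.

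Next I would show that the residual $\sum_u \varepsilon(2^n - u^2)\, u^r$ collapses to a single summand. For $u = 2^j u'$ with $u'$ odd, the odd core of $2^n - u^2$ equals $2^{n-2j} - u'^2$; whenever $n - 2j \geq 3$, this is $\equiv 7 \pmod{8}$, which forces $d_0 \equiv 7 \pmod{8}$ and hence $\chi_{-d_0}(2) = 1$, giving $\varepsilon = 0$. The only non-vanishing contribution therefore occurs at $u = 2^{m-1}$: for $n = 2m$ this corresponds to $n - 2j = 2$ with $M_0 = 3$ and $\chi_{-3}(2) = -1$, so $\varepsilon = 2H(3) = 2/3$; for $n = 2m-1$ this corresponds to $n - 2j = 1$ with $M = 4^{m-1}$, so $\varepsilon = 1/2$. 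Multiplying by $u^r = 2^{r(m-1)}$ and the prefactor $2^r$ produces the claimed residuals $2^{rm+1}/3$ and $2^{rm-1}$.

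The main obstacle is the clean derivation of the Hurwitz class number identity above: it requires careful bookkeeping of the unit groups at the exceptional fundamental discriminants $-3$ and $-4$, together with a separate treatment of the degenerate case $M_0 = 1$ where no fundamental discriminant is attached. Once this is in place, the case analysis modulo small powers of $2$ that isolates the single surviving summand is routine.
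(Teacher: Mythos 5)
Your proof is correct, and at its core it rests on the same two facts the paper uses: multiplicativity of the divisor sum in the Hurwitz class number formula, and the congruence $2^{n+2-2e}-t'^2\equiv 7\pmod 8$ for odd $t'$ (which forces $\chi_{-d_0}(2)=1$ and hence no correction). The organization differs, though. The paper expands $\mathscr{H}_r(2^n)$ completely by the $2$-adic valuation $e$ of $t$, proving $H(2^{2e}M_0)=2^eH(M_0)$ in one shot for $M_0\equiv7\pmod 8$, which yields a full recursion $\mathscr{H}_r(2^n)=\sum_e 2^{(r+1)e}\mathscr{H}_r^{\odd}(2^{n-2e})+(\text{boundary})$ that is then inverted by telescoping. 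You instead do a single odd/even split and compare $H(4M)$ with $2H(M)$ at each $M=2^n-u^2$, so the recursion is already in its inverted form and the answer drops out directly; this is a touch cleaner. The two identities are really the same statement read in opposite directions — iterating your one-step reduction recovers the paper's $2^e$ scaling.

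One point worth flagging: the ``main obstacle'' you identify (a clean derivation of $H(4M)-2H(M)=(1-\chi_{-d_0}(2))H(M_0)$ for arbitrary $M$) is heavier machinery than you actually need. In the sums at hand the odd part $M_0=2^{n-2j}-u'^2$ is always $1$, $3$, or $\equiv 7\pmod 8$; in each of these three cases the identity follows immediately from multiplicativity of $\varphi_m$ and the explicit values $\varphi_m(2^i)=2^{i-1}(2-\chi(2))$ (or from the closed forms $H(3\cdot 2^{2e})=2^e-2/3$ and $H(2^{2e})=2^{e-1}-1/2$), so the exceptional-unit bookkeeping reduces to the two base cases $H(3)=1/3$ and $H(4)=1/2$ that you already invoke. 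You should also note explicitly that $M_0\equiv1\pmod 4$ never occurs (else $H(M_0)$ would be undefined), which follows from the same $\pmod 8$ analysis that kills the generic term.
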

\begin{proof}
Collecting terms by 2-adic valuation we see that
\begin{align*}
\mathscr{H}_r(2^{n})&=\sum_{0\leq 2e<n+2} 2^{er}\sum_{1\leq t < 2^{n/2+1-e},2\nmid t}H(2^{n+2}-2^{2e}t^2)t^r.
\end{align*}
If $n+2-2e<3$ then either $n=2e-1$ in which case the inner sum
is $H(2^{2e})=2^{e-1}-1/2$ or $n=2e$ in which case the inner sum
is $H(3\cdot 2^{2e})=2^e-2/3$.

Let us turn to the general case $n+2-2e\geq 3$. For any $m$ the
function $\varphi_m$ that appears in the definition of Hurwitz
class numbers is multiplicative, and therefore so is the divisor
sum $\displaystyle \sum_{a^2\mid m/d_m}\varphi_m(a)$. For odd $t$ we have $2^{n+2-2e}-t^2\equiv 7\pmod{8}$ and therefore $w=1$, and we compute $$\displaystyle 
H(2^{2e}(2^{n+2-2e}-t^2))=H(2^{n+2-2e}-t^2)\sum_{k=0}^e\varphi_{2^{n+2-2e}-t^2}(2^k)=2^e  H(2^{n+2-2e}-t^2).$$ We deduce that
\begin{align*}
\mathscr{H}_r(2^{2m})&=\sum_{e=0}^{m-1}2^{(r+1)e} \mathscr{H}_r^{\odd}(2^{2(m-e)})+2^{(r+1)m}-2^{rm+1}/3\\
\mathscr{H}_r(2^{2m-1})&=\sum_{e=0}^{m-1}2^{(r+1)e} \mathscr{H}_r^{\odd}(2^{2(m-e)-1})+2^{(r+1)m-1}-2^{rm-1}.
\end{align*}
Solving for the odd Hurwitz sums we conclude the desired formulas.

\end{proof}

We now turn to computing the sums $\mathscr{H}_r(2^n)$. The
following procedure is based on \cite[\S4]{lygeros-rozier}. We
will denote $f(n)\approx g(n)$ if $f(n)-g(n)$ can be computed in
time which is polynomial in $n$. Then the Eichler-Selberg trace
formula implies that
\[\Tr(T_{2^n}|S_{2k})\approx
\sum_{i=0}^{k-1}(-1)^i\binom{2k-2-i}{i}2^{ni}\mathscr{H}_{2k-2-2i}(2^n).\]
In \S\ref{s:hecke} we explained how to do compute the LHS by computing $\Tr(T_2^n|S_{2k})$, and this can be evaluated explicitly as the sum of the $n$-th powers of the normalized $a_2$ coefficients on a basis of $S_{2k}$, which can be done in polynomial time. Solving the system of equations we conclude that $\mathscr{H}_r(2^n)$ can be computed in polynomial time for every even $r$.

The computation of $\mathscr{H}_r(2^n)$ for odd $r$ would
similarly be related to $\Tr(T_{2^n}|S_k(N, \varepsilon))$ for
odd weights $k$ and (necessarily) odd characters
$\varepsilon:(\mathbb{Z}/N \mathbb{Z})^\times\to
\mathbb{C}^\times$. For simplicity suppose $N=\ell \equiv
7\pmod{8}$ is a prime number and $\varepsilon
=\legendre{\cdot}{\ell}$, an odd character with $\varepsilon(2)=1$. The Eichler-Selberg
trace formula \cite[p. 370]{knightly-li:traces} in this case implies that
\begin{align*}
-\Tr(T_{2^n}|S_k(\ell, \varepsilon))%&\approx \sum_{1\leq t
% <2^{n/2+1}}\frac{\rho_t^{k-1}-\overline{\rho}_r^{k-1}}{\rho_t-\overline{\rho}_t}h\left(\frac{t^2-2^{n+2}}{m^2}\right)\mu_\ell(t,m,2^n)
&\approx
           \sum_{i<k/2-1}(-1)^i\binom{2k-2-i}{i}2^{ni}\left(\sum_{\ell\mid 2^{n+2}-t^2}\legendre{t}{\ell}H(2^{n+2}-t^2)t^{k-2-2i}+\right.\\
  &\left.+\ell\sum_{\ell^2\mid 2^{n+2}-t^2}H\left(\frac{2^{n+2}-t^2}{\ell^2}\right)t^{k-2-2i}+2\sum \legendre{\rho_t}{\ell} H(2^{n+2}-t^2)t^{k-2-2i}\right),
\end{align*}
where $t$ varies such that $1\leq t<2^{n/2+1}$ and the last sum
is over $t$ such that $\legendre{t^2-2^{n+2}}{\ell}=1$, i.e.,
$\rho_t\in \mathbb{F}_\ell$.

As in the even weight case we may compute $\Tr(T_{2^n}|S_k(\ell,\varepsilon))$ for any given odd $k$ in polynomial time. Solving the system of equations above, we may compute a Hurwitz sum $\displaystyle \sum H(2^{n+2}-t^2)t^r$ for odd $r$ over $t$ such that $\legendre{t^2-2^{n+2}}{\ell}=1$ and $\legendre{\rho_t}{\ell}=1$ in terms of the sums over $t$ such that $\ell\mid 2^{n+2}-t^2$ and such that $\legendre{t^2-2^{n+2}}{\ell}=1$ and $\legendre{\rho_t}{\ell}=-1$. Asymptotically, this eliminates the need to compute a quarter of the terms in $\mathscr{H}_r(2^n)$.

\section{The Buzzard-Calegari Conjecture}\label{s:bc}
In this section we turn our attention to the Buzzard-Calegari
Conjecture and the proof of Theorem \ref{MainTh}.

\medskip

Let $2k\geq 12$ and $m=\dim S_{2k}$ the dimension of the space of cusp forms of weight $2k$ and level $\SL_2(\mathbb{Z})$. It is elementary to show that the Newton polygon of the Buzzard-Calegari polynomial 
\[P_{\BC}(X)=1+\sum_{n=1}^m X^n\prod_{j=1}^n
\frac{2^{2j}(2k-8j)!(2k-8j-3)!(2k-12j-2)}{(2k-12j)!(2k-6j-2)!}\]
is the same as the Newton polygon of the points $\{(n, a_n)|0\leq n\leq m\}$ where the nonnegative integers $a_n$ can be described explicitly as follows. For an integer $\ell$ we denote $\delta_\ell=1$ if $\ell\equiv 1\pmod{6}$ and $0$ otherwise. Then
\begin{equation}\label{eq:bc}a_n = \frac{3n(n+1)}{2}+\sum_{\ell=3n+4}^{6n+1}e_{n,\ell}
v_2(k-\ell),
\end{equation}
where
\[e_{n,\ell}=\begin{cases}\left\lfloor\frac{\ell-1}{3}\right\rfloor -n&3n+4\leq
\ell<4n\\
\delta_\ell
+n-1-\left\lfloor\frac{\ell}{6}\right\rfloor&\ell=4n,4n+1\\
\delta_\ell
+n-\left\lfloor\frac{\ell}{6}\right\rfloor&4n+2\leq\ell\leq 6n\\
1&\ell=6n+1\end{cases}.\]

The Buzzard-Calegari conjecture posits that the Newton polygons
of points with heights prescribed by equations \eqref{eq:np} (Corollary \ref{c:wedge n}) and
\eqref{eq:bc} are the same. We will experimentally verify our truncated version of this conjecture  by
making explicit the list of transcendental numbers $\Omega_i$
and integers $n_j,d_j,u_j$ in Corollary \ref{c:wedge n}.

\begin{theorem}\label{t:main}
For $1\leq n\leq 15$ there exist $\Omega_i\in \mathbb{Z}_2$ and
integers $n_j, u_j$ such that:
\[v_2(\Tr(\wedge^n T_2|S_{2k}))=\frac{3n(n+1)}{2} +\sum_i
v_2(k-\Omega_{i})+\sum_j\min(n_{j},2 v_2(k-u_{j})),\]
for all large enough $k$. The number of expressions of the form $v_2(k-\alpha)$, counted with multiplicity, is the same as in  \eqref{eq:bc}. \end{theorem}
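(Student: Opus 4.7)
The plan is to invoke Corollary \ref{c:wedge n} to obtain the general form
\[
v_2(\Tr(\wedge^n T_2|S_{2k}))=\lambda_n +\sum_i v_2(k-\Omega_{i})+\sum_j\min(n_{j}, d_{j} v_2(k-u_{j})),
\]
and then, for each $1\leq n\leq 15$ separately, pin down the specific data by an explicit computer calculation. Concretely, three things must be verified: (i) $\lambda_n=3n(n+1)/2$; (ii) every irreducible factor of degree $\geq 2$ appearing in the Newton polynomial is in fact quadratic, so each $d_j=2$; and (iii) the total count $|\{i\}|+|\{j\}|$ of $v_2(k-\alpha)$ expressions agrees with $\sum_{\ell=3n+4}^{6n+1}e_{n,\ell}$ from \eqref{eq:bc}.

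The computation proceeds in two stages. First, for each $i\leq n$ I would compute the $2$-adic power series $F_i(x)$ attached to $\Tr(T_{2^i}|S_{4x})$ (taking $D=2$, $r=0$, which suffices since level-one weights are even) using the Eichler-Selberg trace formula combined with the Hurwitz-sum efficiency of Section \ref{refine}: the sums $\mathscr{H}^{\odd}_r(2^i)$ are recovered from $\mathscr{H}_r(2^i)$ via Lemma \ref{l:hurwitz odd sums}, which in turn are obtained from Hecke traces on spaces of cusp forms of even and odd weight by solving an appropriate linear system. These building blocks are then combined by exponential Bell polynomials as in Section \ref{s:hecke} to produce the power series $f_0(x)$ attached to $\Tr(\wedge^n T_2|S_{4x})$.

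Second, I would perform the Newton slope factorization $f_0(x)=\alpha\, P_0(x)\, u_0(x)$ and factor the degree-$d$ Newton polynomial $P_0(x)$ over $\mathbb{Q}_2$ via Hensel's lemma into its linear factors $x-\omega_i$ and irreducible higher-degree factors $R_j(x)$. Applying Lemma \ref{l:vp poly} piece by piece yields the desired formula, with $\lambda_n=v_2(\alpha)$, the $\Omega_i$ read off as the $\omega_i$, and each $R_j$ contributing one $\min(n_j, d_j v_2(k-u_j))$ term. With the resulting data recorded explicitly (as in the Appendix), assertions (i)--(iii) can be read off by inspection for each of the fifteen values of $n$.

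The main obstacle will be precision management. The exponential Bell polynomial in $n$ variables has $p(n)$ monomials and denominator dividing $n!$, so each $F_i$ must be computed to precision $M+v_2(n!)$ in order to recover $f_0$ to precision $M$; furthermore, the precision required to safely perform the Newton slope factorization and the subsequent Hensel factorization of $P_0$ depends on the spread of slopes of $f_0$ and on the $2$-adic distances between the roots of $P_0$, which must be bounded a priori and then verified a posteriori. A secondary subtlety is that Corollary \ref{c:wedge n} --- through Theorem \ref{t:vp seq} and Proposition \ref{c:super} --- guarantees the formula only for $k$ sufficiently large in a non-effective sense, which is exactly the source of the ``$k\gg 0$'' hypothesis.
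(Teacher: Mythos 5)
Your proposal is correct and takes essentially the same approach as the paper: implement Corollary~\ref{c:wedge n} computationally by building the power series attached to $\Tr(\wedge^n T_2|S_{2k})$ from the Eichler--Selberg trace formula, Hurwitz class-number sums (\S\ref{refine}), and exponential Bell polynomials, then perform a Newton slope factorization and a $2$-adic polynomial factorization, apply Lemma~\ref{l:vp poly}, and read off (i)--(iii) from the explicit data. Your precision analysis parallels the remark the paper records immediately after the theorem, and your observation that the $k\gg 0$ hypothesis comes from the ineffectiveness in Theorem~\ref{t:vp seq} (via Proposition~\ref{c:super}) matches the paper's discussion as well.
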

\begin{proof}
We implemented Corollary \ref{c:wedge n} in Sage \cite{sage}, the
results being tabulated in the Appendix.
In the case of $n=1$, the one constant $\Omega \in \mathbb{Z}_2$ appearing in the expression above can be computed using the $2$-adic log map:
\[\Omega = \frac{1}{2}\left(\frac{\log_2(1-2
 \rho)}{\log_2(1-\rho)}+1\right),\] where
$\rho=\frac{1-\sqrt{-7}}{2}$ and therefore for all $k\geq 2$ we
have $$v_2(\Tr (T_2|S_{2k}))=3+v_2(k-\Omega).$$
Indeed, since the exponential sum contains only unit bases we are not constrained to $k\gg 0$.

For the convenience of the reader, we explain in more detail the
case $n=4$. In this case, the power series $f_0(k)$ obtained
from $\Tr(\wedge^4 T_{2}|S_{2k})$ has content $30=\frac{3\cdot 4\cdot 5}{2}$ and factors,
up to an invertible power series and precision $10$, as
\[(k-789)(k-23)(k-337)(k-25)(k-18)(k-22)(k-980)(k-720)(k^2-38k+361).\]
The linear factors provide the list of $\Omega_j$. Applying Lemma \ref{l:vp poly} to the unique quadratic polynomial $q(k)$ we obtain $\lambda=0$, $\delta=5$ and $(u_n)=\{1,1,0,0,1\}$ and therefore $v_2(q(k))= \min(15, 2 v_2(k - 19))$, thereby obtaining the desired explicit formula for $v_2(\Tr(\wedge^4 T_2|S_{2k}))$.
\end{proof}

\begin{remark}
It is important, for applications, to know to what precision one
can compute the constants in Theorem \ref{t:main}. Suppose one
works in $\mathbb{Q}_2$ with $M$ digits of precision and, in
Theorem \ref{t:vp seq}, one approximates power series with
polynomials of degree $D\leq M$. If $f(x) =
\sum\limits_{u=1}^\infty x^u/u! \sum \limits_{n=1}^m a_n
(\log_2(b_n^2))^u$ is approximated with $f_D(x) = \sum
\limits_{u=1}^D x^u/u! \sum \limits_{n=1}^m a_n
(\log_2(b_n^2))^u$ then $f(x)\equiv f_D(x)\pmod{2^D}$. Indeed,
since $v_2(\log_2(b_n^2))\geq 2$ it follows that $$v_2(1/u! \sum
\limits_{n=1}^m a_n (\log_2(b_n^2))^u)\geq 2u-v_2(u!)\geq u$$ for
all $u$.

Removing the content $\frac{3n(n+1)}{2}$ of the power series
$f_D(x)$, we may compute the nonnegative slope polynomials
$P_D(x)$ with precision $P=D-\frac{3n(n+1)}{2}$. As long as each
$n_j$ in Theorem \ref{t:main} is smaller than this precision we
know each $\Omega_i$ to precision $P$.

The tables in the Appendix were obtained using $N=10000$ and $D=500$ which means that we identified each constant for $\wedge^n T_2$ up to precision $500-\frac{3n(n+1)}{2}$, so overall precision at least 140.
\end{remark}
\begin{remark}
It is computationally useful in the proof of Theorem \ref{c:main} that the explicit constants $\Omega_j$ (resp. $u_j$) in Theorem \ref{t:main} appear ``2-adically close'' to corresponding $\ell$ in \eqref{eq:bc}. By this we mean the following: the total number of constants $\Omega_j$ (counted with multiplicity 1) and $u_j$ (counted with multiplicity 2) in Theorem \ref{t:main} equals the total number of $\ell$ (counted with multiplicity $e_{n,\ell}$) in \eqref{eq:bc}. Moreover, for each $\Omega_j$ (resp. $u_j$) there exists an $\ell$ such that $v_2(\Omega_j-\ell)$ (resp. $v_2(u_j-\ell)$) is large, though we are not able to quantify this valuation in general. The pairing of $\Omega_j$ (resp. $u_j$) and the corresponding $\ell$ appears in the Appendix.
\end{remark}

Our main computational result is a partial verification of the
Buzzard-Calegari conjecture using Theorem \ref{t:main}. Recall that if $N$ is a Newton polygon by the truncation at $m$ we mean the portion $N^{\leq m}$ of $N$ in the region $0\leq n\leq m$. Also, $N_1\geq N_2$ means that the Newton polygon $N_1$ lies on or above the Newton polygon $N_2$.

\begin{theorem}\label{c:main}
For $k\gg 0$ the Newton polygon
$N_{\BC}$ of the Buzzard-Calegari polynomial $P_{\BC}$
has some vertex $n$ in the interval $[7,15]$. For any such vertex:
\[N_{\BC}^{\leq n}=N(P_{T_2}^{\deg\leq n}).\]
Moreover, $N(P_{T_2}^{\deg\leq 15})\geq N_{\BC}$.
\end{theorem}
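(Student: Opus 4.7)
The plan is to verify the three assertions by a finite case analysis indexed by residue classes $k \pmod{2^N}$, for $N$ large enough that both the integers $a_n$ from \eqref{eq:bc} and the heights $v_2(\Tr(\wedge^n T_2|S_{2k}))$ from Theorem \ref{t:main} are determined as explicit functions of $k \pmod{2^N}$ for $0 \leq n \leq 15$. The precision $N$ is dictated by the maximum of the $n_j$ appearing in the Appendix and by the 2-adic distances between the $\Omega_i,u_j$ and their paired integers $\ell$ from \eqref{eq:bc}.

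First, I would enumerate the residue classes modulo $2^N$ and, for each one, compute $a_n$ for $n = 0, 1, \ldots, 15$ from \eqref{eq:bc} and then form the lower convex hull to obtain $N_{\BC}^{\leq 15}$. Checking that $N_{\BC}^{\leq 15}$ always has a vertex in $[7,15]$ reduces to a finite enumeration. Since $P_{T_2}(X) = \sum_n (-1)^n \Tr(\wedge^n T_2|S_{2k}) X^n$, we have $v_2(b_n) = v_2(\Tr(\wedge^n T_2|S_{2k}))$, and Theorem \ref{t:main} provides a closed-form expression for this valuation in each residue class.

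Second, for each residue class and each candidate vertex $n \in [7,15]$, I would verify the equality $v_2(b_n) = a_n$ and the inequalities $v_2(b_{n'}) \geq a_{n'}$ for $n'$ lying between two consecutive vertices of $N_{\BC}^{\leq n}$. The equality at vertices uses the term-by-term pairing from the remark following Theorem \ref{t:main}: each $\Omega_j$ is 2-adically very close to a distinguished integer $\ell$ in \eqref{eq:bc}, so $v_2(k-\Omega_j) = v_2(k-\ell)$ for $k$ in the relevant class; each $u_j$ (of multiplicity 2) pairs with an $\ell$ satisfying $e_{n,\ell} = 2$, so generically $\min(n_j, 2v_2(k-u_j)) = 2v_2(k-\ell)$. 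Combining these yields $v_2(b_n) = a_n$ term by term. The claim $N_{\BC}^{\leq n} = N(P_{T_2}^{\deg \leq n})$ then follows formally: if the points $(n', v_2(b_{n'}))$ for $n' \leq n$ all lie on or above $N_{\BC}^{\leq n}$ with equality at its vertices, their lower convex hull coincides with $N_{\BC}^{\leq n}$.

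Third, the global inequality $N(P_{T_2}^{\deg \leq 15}) \geq N_{\BC}$ amounts to verifying $v_2(b_n) \geq a_n$ for $0 \leq n \leq 15$ in every residue class. The main obstacle is handling the saturation in the $\min(n_j, 2v_2(k-u_j))$ terms: when $k$ lies in an exceptionally narrow arithmetic progression where $2v_2(k-u_j) > n_j$, the $\min$ caps at $n_j$ rather than $2v_2(k-u_j)$, so $v_2(b_n)$ can strictly exceed $a_n$ and $N(P_{T_2}^{\deg \leq 15})$ can acquire spurious vertices in $[7,15]$ (as illustrated by the $k \equiv 131126 \pmod{2^{17}}$ example from the introduction, where $14$ is a vertex of $N(P_{T_2}^{\deg \leq 15})$ but not of $N_{\BC}$). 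The delicate verification is that these exceptional cases never pull $N(P_{T_2}^{\deg \leq 15})$ below $N_{\BC}$, and that at every vertex of $N_{\BC}$ in $[7,15]$ the equality $v_2(b_n) = a_n$ still holds. This is carried out by explicit Sage enumeration, using the tabulated pairings in the Appendix as the ground truth for matching the terms of $v_2(\Tr(\wedge^n T_2|S_{2k}))$ with those of $a_n$.
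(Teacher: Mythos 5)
Your overall plan---finite case analysis over 2-adic congruence classes, using the explicit pairing of the $\Omega_i,u_j$ with the integers $\ell$ from \eqref{eq:bc}---is the right one and matches the paper in spirit. However, there is a genuine gap in your treatment of $N_{\BC}$. You propose to ``compute $a_n$ for $n=0,\ldots,15$ \ldots\ and then form the lower convex hull to obtain $N_{\BC}^{\leq 15}$.'' That lower convex hull is $N(P_{\BC}^{\deg\leq 15})$, \emph{not} $N_{\BC}^{\leq 15}$: the paper warns explicitly in the introduction that $N(P^{\deg\leq m})\geq N(P)^{\leq m}$ but the two need not coincide. In particular, a vertex of $N(P_{\BC}^{\deg\leq 15})$ at some $n\leq 15$ is not automatically a vertex of the full $N_{\BC}$, because a shallow segment of $N_{\BC}$ starting before $n$ could run past degree $15$ and undercut the candidate vertex. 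To certify that $n$ really is a vertex of $N_{\BC}$ you must bound slopes coming from degrees $>15$, which is exactly what Lemma~\ref{l:vertex} does: one must inspect the Newton polygon of $P_{\BC}$ truncated up to degree $n+\frac{2}{3}\sum_{\ell} e_{n,\ell}\,v_2(k-\ell)$, a bound that depends on $k$ and can be much larger than $15$ when $k$ is 2-adically close to one of the $\ell$'s. Without this, your verification of ``$N_{\BC}$ has a vertex in $[7,15]$'' and of ``$N_{\BC}^{\leq n}=N(P_{T_2}^{\deg\leq n})$'' is not conclusive. (You noticed the analogous $N(P_{T_2}^{\deg\leq 15})$ vs.\ $N(P_{T_2})$ distinction when discussing spurious vertices, but did not apply the same caution to $P_{\BC}$.)

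A secondary concern is feasibility: you fix a single precision $N$ up front and enumerate all $2^N$ residue classes. The paper's computation encountered 2-adic balls of radius valuation up to $84$, so $N=84$ would be required and a brute-force enumeration of $2^{84}$ classes is out of reach. Moreover, the threshold $N$ is not easily determined a priori---it depends on how the valuations $v_2(k-\Omega_i)$, $v_2(k-u_j)$, and $v_2(k-\ell)$ interact at each stage. The paper instead uses an adaptive subdivision: start from $\mathbb{Z}_2$, and whenever the data are not ``precisely computed'' on a ball, split it into two half-balls and recurse. This terminates after visiting only $1058$ balls over $23$ iterations, and it sidesteps both the infeasibility and the need to guess $N$ in advance. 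Incorporating Lemma~\ref{l:vertex} and replacing the fixed-precision enumeration with this adaptive refinement would bring your proposal into line with a correct proof.
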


We shall need the following technical result. 

\begin{lemma}\label{l:vertex}
Suppose $k>6$ is an integer and $n\geq 1$. The Newton polygon $N_{\BC}$ has a vertex at $n$ if and only if the Newton polygon of $P_{\BC}(X)$ truncated in degree $n+\frac{2}{3}\sum\limits_{\ell=3n+4}^{6n+1}e_{n,\ell} v_2(k-\ell)$ does as well, in which case $N_{\BC}^{\leq n}=N(P_{\BC}^{\deg\leq n})$.
\end{lemma}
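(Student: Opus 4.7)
The plan is to invoke the support-line characterization: $(n,a_n)$ is a vertex of the lower convex hull of $\{(i,a_i)\}_{0\leq i\leq m}$ if and only if $L<R$, where $L:=\max_{j<n}(a_n-a_j)/(n-j)$ and $R:=\min_{i>n}(a_i-a_n)/(i-n)$. Write $E_n := a_n - 3n(n+1)/2 = \sum_{\ell=3n+4}^{6n+1}e_{n,\ell}v_2(k-\ell)$ and let $R_M$ denote $R$ with the minimum restricted to $n<i\leq M$. Observe that $L$ depends only on data with $j<n\leq M$, so it is common to both $N_{\BC}$ and the truncated polygon $N(P_{\BC}^{\deg\leq M})$; since trivially $R\leq R_M$, the implication $L<R\Rightarrow L<R_M$ is automatic, giving the forward direction. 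The accompanying identity $N_{\BC}^{\leq n}=N(P_{\BC}^{\deg\leq n})$ comes from a sandwich: when $(n,a_n)$ is a vertex of $N_{\BC}$, the restriction $N_{\BC}^{\leq n}$ is the lower hull of $V\cap[0,n]$, with $V$ the vertex set of $N_{\BC}$, and the inclusions $V\cap[0,n]\subseteq\{(j,a_j):j\leq n\}\subseteq\{(j,a_j):j\leq m\}$ combined with the inclusion-reversing property of lower hulls yield $N_{\BC}^{\leq n}\geq N(P_{\BC}^{\deg\leq n})\geq N_{\BC}^{\leq n}$.

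The backward direction is the heart of the argument: given $L<R_M$ I must upgrade to $L<R$, equivalently show $R_{>M}:=\min_{i>M}(a_i-a_n)/(i-n)>L$ when $M=n+\tfrac{2}{3}E_n$. The sole input is the lower bound $a_i\geq 3i(i+1)/2$ extracted from the leading term of \eqref{eq:bc}. Substituting this into $(a_n-a_j)/(n-j)$ gives
\[
\frac{a_n-a_j}{n-j}\leq\frac{3(n+j+1)}{2}+\frac{E_n}{n-j},
\]
which decreases in $n-j$, so $L\leq 3n+E_n$ with the extremal case $j=n-1$. Symmetrically, for $i>M$ the constraint $i-n>\tfrac{2}{3}E_n$ forces $E_n/(i-n)<3/2$, and
\[
\frac{a_i-a_n}{i-n}\geq\frac{3(i+n+1)}{2}-\frac{E_n}{i-n}>\frac{3(M+n+1)}{2}-\frac{3}{2}=3n+E_n,
\]
so $R_{>M}>3n+E_n\geq L$, giving the required strict inequality.

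The one step that requires care — and the main source of non-triviality — is checking that the threshold $M=n+\tfrac{2}{3}E_n$ is calibrated exactly right: it is chosen so that the worst-case upper bound on $L$ (attained at $j=n-1$) matches the lower bound on $R_{>M}$ produced by the estimate $E_n/(i-n)<3/2$. Any strictly smaller threshold would leave a window in which a hypothetical point $(i,a_i)$ with $i>M$ could dip below the support line at $(n,a_n)$ and destroy the vertex. The hypothesis $k>6$ is used only to guarantee $\dim S_{2k}\geq 1$ so that the polynomial $P_{\BC}$ has positive degree and the description \eqref{eq:bc} applies to the relevant indices $n$.
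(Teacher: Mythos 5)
Your proposal is correct and follows the same route as the paper: reduce to the slope comparison $L<R$ at $n$, show that the truncation only affects the slopes to the right and hence $L<R_M$ is given, and verify that for $i>M=n+\tfrac{2}{3}E_n$ the estimate $a_i\geq\tfrac{3i(i+1)}{2}$ forces $(a_i-a_n)/(i-n)>3n+E_n\geq L$. The paper states the latter step tersely as "This inequality is automatic"; your explicit calibration of the threshold (via $L\leq 3n+E_n$ at $j=n-1$ and $E_n/(i-n)<3/2$) and the sandwich argument for $N_{\BC}^{\leq n}=N(P_{\BC}^{\deg\leq n})$ supply exactly the details the paper elides.
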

\begin{proof}
The ``only if'' direction is straightforward. To check that
$N_{\BC}$ has a vertex at $n$ one needs to check that
slopes in the region $\leq n$ are $<$ the slopes in the region $\geq n$, i.e., for
all vertices $a<n$ and vertices $b>n$:
\[\frac{1}{n-a}\left(\frac{3n(n+1)}{2}+\sum
e_{n,\ell}v_2(k-\ell)-\frac{3a(a+1)}{2}-\sum
e_{a,\ell}v_2(k-\ell)\right) \]\[<\frac{1}{b-n}\left(\frac{3b(b+1)}{2}+\sum
e_{b,\ell}v_2(k-\ell)-\frac{3n(n+1)}{2}-\sum
e_{n,\ell}v_2(k-\ell)\right).\]
This inequality is automatic for all $b> n+\frac{2}{3}\sum\limits_{\ell=3n+4}^{6n+1}e_{n,\ell} v_2(k-\ell)$ and therefore one only needs to compute explicitly slopes in the Newton polygon truncated at this degree.

\end{proof}

\begin{proof}[{\bf Proof of Theorem \ref{c:main}}]
We now describe our computational approach to verifying the corollary. At each iteration $i$ we are left with verifying the case of $k$ varying in a collection of 2-adic balls $\mathcal{C}_i=\{\mathcal{B}(a_{i,j}, n_{i,j})\}$ (where $\mathcal{B}(a,m)=\{x\in \mathbb{Z}_2\mid v_2(x-a)>m\}$), initially starting with $\mathcal{C}_0=\{\mathbb{Z}_2\}$.

We begin with the following observation:
\begin{enumerate}
\item if $v_2(k-\Omega)\neq
v_2(k-\ell)$ then either $k\in \mathcal{B}(\Omega,
v_2(\Omega-\ell))$ or $k\in \mathcal{B}(\ell, v_2(\Omega-\ell))$;
\item if $n>v_2(u-\ell)$ and $\min(n,
v_2(k-u))\neq v_2(k-\ell)$ then again either $k\in \mathcal{B}(u,
v_2(u-\ell))$ or $k\in \mathcal{B}(\ell, v_2(u-\ell))$;
\item finally, if $u=\ell$ then
$\min(n, 2\cdot v_2(k-\ell))\neq 2\cdot v_2(k-\ell)$ then $k\in \mathcal{B}(\ell, n/2)$.
\end{enumerate}
Therefore the two sets of vertices $\{(n, v_2(\Tr(\wedge^n
T_2|S_{2k})))\mid n\leq 15\}$ and $\{(n,a_n)\mid n\leq 15\}$
coincide unless for some $n$ and a pair $(\Omega, \ell)$
(resp. triple $(m,u,\ell)$) in the Appendix we have
$v_2(k-\Omega)\neq v_2(k-\ell)$ (resp. $\min(m,2\cdot v_2(k-u))\neq
2\cdot v_2(k-\ell)$). By the discussion above we conclude that we next need to treat the case when $k$ varies in one of the $2$-adic balls in the collection:
\[\mathcal{C}_1 = \{\mathcal{B}(\Omega,
v_2(\Omega-\ell))\}\cup \{\mathcal{B}(\ell,
v_2(\Omega-\ell))\}\cup \{\mathcal{B}(u,
v_2(u-\ell))\}\cup \{\mathcal{B}(\ell,
v_2(u-\ell))\}\cup \{\mathcal{B}(\ell,
n_j/2)\}.\]
Suppose we are in iteration $i$ and we desire to verify
Theorem \ref{c:main} for all sufficiently large $k$ in
$\mathcal{B}(a, m)\in \mathcal{C}_i$.

For an integer $N$ we denote:
\begin{align*}
\mathcal{S}_{T_2}(\mathcal{B}(a,m), N) &= \{(n, \frac{3n(n+1)}{2}+\sum v_2(a-\Omega_i)+\sum \min(n_j, 2\cdot v_2(a-u_j))\mid 0\leq n\leq N\}\\
\mathcal{S}_{\BC}(\mathcal{B}(a,m), N) &= \{(n,
                                      \frac{3n(n+1)}{2}+\sum
                       e_{n,\ell}\cdot
                                      v_2(a-\ell)u_j))\mid 0\leq
                                      n\leq N\}.
\end{align*}

Note that if $k\in \mathcal{B}(a,m)$ then $v_2(k-\Omega)\geq
v_2(a-\Omega)$ if $v_2(a-\Omega)\leq m$ with equality if the latter inequality is strict; in this case we say that $v_2(k-\Omega)$ is \emph{precisely computed} in the ball $\mathcal{B}(a,m)$. We conclude that 
the points in $\mathcal{S}_{T_2}$ lie below the points $\{(n, v_2(\Tr \wedge^n T_2|S_{2k}))\mid n\leq N\}$, and similarly for $\mathcal{S}_{\BC}$, and the same can be said of their Newton polygons. We say that the $n$-th point in $\mathcal{S}_{T_2}$ or $\mathcal{S}_{\BC}$ is precisely computed in the ball $\mathcal{B}(a,m)$ if every valuation in the formula for $n$ is precisely computed in $\mathcal{B}(a,m)$. Note that if every vertex of the Newton polygon of $\mathcal{S}_{T_2}$ is precisely computed then this Newton polygon is the actual Newton polygon of the points $\{(n, v_2(\Tr \wedge^n T_2|S_{2k}))\mid n\leq N\}$, and similarly for $\mathcal{S}_{\BC}$.

We begin with determining the largest $n\leq 15$ for which $n$
is a vertex of $N_{\BC}$. First, if $n$ is a vertex of $N_{\BC}$
then it must also be a vertex of the Newton polygon $N_{\BC, N}$
of the points $\{(n,a_n)\mid 0\leq n\leq N\}$ for every $N$. On
a first round, to check if $n$ is a vertex of $N_{\BC}$ we first
verify if $n$ is a vertex of $N_{\BC, 2n+1}$. The reason for
this choice is the following: if $a_n$ is not precisely computed
in the ball $\mathcal{B}(a,m)$ then $a$ is close to some $\ell$
appearing in the expression for $a_n$. However, the set of
$\ell$-s appearing in the formula for $a_{2n+1}$ is disjoint
from the sets of $\ell$-s appearing in $a_n$ and therefore the
last vertex in $N_{\BC, 2n+1}$ is likely to be precisely
computed. As a proxy for $N_{\BC, N}$ we use the Newton polygon
of $\mathcal{S}_{\BC}(\mathcal{B}(a,m), N)$. If $a_n$ is not a
vertex of $\mathcal{S}_{\BC}(\mathcal{B}(a,m), 2n+1)$ and the
Newton polygon is precisely computed (in the sense mentioned
above) then $n$ is definitely not a vertex of $N_{\BC}$ and we
discard it. If $a_n$ is precisely computed and is a vertex of
$\mathcal{S}_{\BC}(\mathcal{B}(a,m), 2n+1)$ (not necessarily
precisely computed) then $n$ is definitely a vertex of $N_{\BC,
  2n+1}$, and plausibly a vertex of $N_{\BC}$. In all other
cases the computation is imprecise and we add $\mathcal{B}(a,m)$
to the set $\mathcal{CI}_i$ of balls $\mathcal{B}(a,m)$ where
the computation was not precise. It remains to further verify if
$n$ is a vertex of $N_{\BC}$ in the plausible case. For this, we
apply Lemma \ref{l:vertex}. Since $a_n$ was precisely computed
so is the upper bound $N$ from Lemma \ref{l:vertex}. We apply to
$N_{\BC, N}$ the procedure described above in the case of
$N_{\BC, 2n+1}$. Either $n$ is definitely a vertex of $N_{\BC,
  N}$ and, by Lemma \ref{l:vertex}, definitely a vertex of
$N_{\BC}$, or $n$ is definitely not a vertex, or
$\mathcal{B}(a,m)$ is added to $\mathcal{CI}_i$. We determine
the largest $n\leq 15$ which is definitely a vertex by going
backwards from $15$ with this verification until we arrive at a
vertex.

If $\mathcal{B}(a,m)$ has not yet been added to
$\mathcal{CI}_i$, we compute the Newton polygons of
$\mathcal{S}_{T_2}(\mathcal{B}(a,m), n)$ and
$\mathcal{S}_{\BC}(\mathcal{B}(a,m), n)$. If these Newton
polygons are precisely computed we verify the Buzzard-Calegari
conjecture up to the vertex $n$ by checking if the polygons are equal.

At the end of iteration $i$ we are left with a subset $\mathcal{CI}_i\subset \mathcal{C}_i$ of balls over which our computations were not precise enough to verify Buzzard-Calegari. For each $\mathcal{B}(a,m)\in \mathcal{CI}_i$ we produce two half balls $\mathcal{B}(a, m+1)$ and $\mathcal{B}(a+2^{m+1}, m+1)$ and add them to $\mathcal{C}_{i+1}$, the collections $\mathcal{C}_{i+1}$ and $\mathcal{CI}_i$ having the same union. In iteration $i+1$ we will verify the corollary for the balls in $\mathcal{C}_{i+1}$, each of which has smaller radius than the balls in $\mathcal{C}_i$, and therefore more likely to yield a precise computation of Newton polygons. 

In 23 iterations which ran for about 6 minutes on a laptop the algorithm
examined every one of the 1058 balls in $\mathcal{C}_1$ and
established the validity of the theorem, including the existence of a vertex of $N_{\BC}$ in the interval $[7,15]$.

Finally, we remark that the constants in the Appendix are known to sufficient precision to make the above computations correct. Indeed, by the remark after Theorem \ref{t:main} we know these constants up to precision 140. Since the balls $\mathcal{B}(a,m)$ in every $\mathcal{C}_i$ have radius valuation at most $m=84$, it follows that if the center $a$ is approximated by $a'$ with precision 140 then $\mathcal{B}(a, m)=\mathcal{B}(a' ,m)$.
\end{proof}

We now turn our attention to the relationship between
$N(P_{T_2})$ and $N(P_{T_2}^{\deg \leq m})$. The following result is a variant of Lemma \ref{l:vertex} for the characteristic polynomial of $T_2$.

\begin{lemma}\label{l:vertex T2}
Suppose $k\gg 6$ is an integer and $1\leq n\leq 15$. The Newton polygon of $P_{T_2}(X)$ has a vertex at $n$ if and only if so does the Newton polygon of $P_{T_2}(X)$ truncated in degree $$m_{n,k}=\frac{1}{4}(8+\lambda+\sqrt{(8+\lambda)^2-8(4+\lambda n-f_n)}),$$ where $f_n$ is the expression on the right hand side of Theorem \ref{t:main} and $\lambda=\max\limits_{0\leq a<n}\frac{f_n-f_a}{n-a}$. In this case, we further have $N(P_{T_2})^{\leq n}=N(P_{T_2}^{\deg\leq n})$.
\end{lemma}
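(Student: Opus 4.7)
The plan is to imitate the proof of Lemma \ref{l:vertex}, now with $P_{T_2}$ in place of $P_{\BC}$. The ``only if'' direction is immediate, since a vertex of the untruncated Newton polygon remains a vertex of any sufficiently long truncation. For the ``if'' direction, suppose $n$ is a vertex of $N(P_{T_2}^{\deg\leq m_{n,k}})$ and write $v_b=v_2(\Tr(\wedge^b T_2|S_{2k}))$. By Theorem~\ref{t:main} we have $v_n=f_n$, and the hypothesis on $\lambda$ says that every slope of $N(P_{T_2})$ arriving at $n$ from the left is at most $\lambda$. To upgrade this to a vertex of the full polygon it suffices to show that the slope from $n$ to any $b>n$ strictly exceeds $\lambda$, that is,
\[
v_b \;>\; f_n + \lambda(b-n) \qquad \text{for all } b>n.
\]
The concluding claim $N(P_{T_2})^{\leq n}=N(P_{T_2}^{\deg\leq n})$ would then be automatic, since once $n$ is a vertex the polygon to its left is determined by the coefficients of degree $\leq n$.

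The key auxiliary ingredient will be a uniform quadratic lower bound of the form $v_b\geq 2b^2-8b+4$, valid for all $b\geq 1$ and all sufficiently large $k$. Granting this, the required inequality $v_b>f_n+\lambda(b-n)$ for $b>n$ reduces to
\[
2b^2-(8+\lambda)b + (4+\lambda n - f_n) \;>\; 0,
\]
and solving this quadratic in $b$ shows that its larger root is precisely the threshold $m_{n,k}$ stated in the lemma. Hence for every $b>m_{n,k}$ the desired slope comparison holds automatically, so it is enough to verify the vertex property within the truncation of degree $m_{n,k}$, which is exactly what is claimed.

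The main obstacle is therefore establishing the quadratic lower bound on $v_b$. Corollary~\ref{c:wedge n} already writes $v_b$ as $\lambda_b$ plus a manifestly non-negative contribution, so it suffices to prove an effective lower estimate on the constants $\lambda_b$. This can be done by tracking $2$-adic valuations through the exponential Bell polynomial expansion of $\Tr(\wedge^b T_2|S_{2k})$ and through the Eichler--Selberg trace formula in \S\ref{s:hecke}, and combining them with the classical baseline slope estimates for $T_2$ on $S_{2k}$ (in particular, every Hecke slope is bounded below and the successive slopes grow sufficiently fast); pinning $\lambda_b$ down in closed form is delicate, but any bound of the shape $2b^2-8b+4$ suffices and can be obtained by these means.
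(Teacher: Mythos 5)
Your overall architecture matches the paper's: only--if is immediate, if reduces to showing $v_b>f_n+\lambda(b-n)$ for $b>n$, and you correctly observe that a quadratic bound $v_b>2b^2-8b+4$ makes this automatic once $b$ exceeds the larger root of $2b^2-(8+\lambda)b+(4+\lambda n-f_n)$, which is exactly $m_{n,k}$. You also correctly note that the concluding equality $N(P_{T_2})^{\leq n}=N(P_{T_2}^{\deg\leq n})$ is then automatic.

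The genuine gap is your proposed derivation of the quadratic lower bound. You suggest reading it off from Corollary~\ref{c:wedge n} by ``tracking $2$-adic valuations through the exponential Bell polynomial expansion'' together with the Eichler--Selberg trace formula. This does not work as stated: Corollary~\ref{c:wedge n} pins down $\lambda_b$ only after a per-$b$ power-series factorization (and Theorem~\ref{t:main} makes this explicit only for $b\leq 15$), while the lemma needs a bound valid for \emph{all} $b>n$, potentially unbounded. Moreover the Bell polynomial has $p(b)$ monomials and the trace formula contributes $O(2^{b/2})$ exponentials; controlling the cancellation among them enough to extract a uniform quadratic content is essentially equivalent to already knowing the slope lower bound. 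The paper instead bypasses the exponential-sum machinery entirely and cites \cite[Lemma 2.1]{vonk} and \cite[Lemma 3.1]{wan}, estimates for coefficients of $p$-adic characteristic series coming from dimension variation of $M_{k+j d(p-1)}$, and then grinds out arithmetic on $d_u=\lfloor (k+ud(p-1))/12\rfloor+\delta_u$ to land on $v_2(b_n)\geq 2n^2-8n+35/8$. Your parenthetical mention of ``classical baseline slope estimates'' gestures at the right input, but without invoking a Wan/Vonk-type result (or equivalently a uniform lower bound on the growth of $T_2$-slopes) and carrying out the resulting computation, the central inequality is asserted rather than proved.
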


\begin{proof}

%We have made explicit the polynomial of \cite[Theorem 2.2]{vonk} as in the proof of \cite[Lemma 3.1]{wan} and we get the following inequality:
%\begin{equation}\label{eq:vonk}
%v_2(b_n)\geq 2n^2-8n+4,
%\end{equation}
%for all $n\geq 1$ and $k\gg 6$, where $P_{T_2}(X)=\sum b_nX^n$. 

Let $b_n$ be the coefficient of $X^n$ in the characteristic polynomial of $T_p$ acting on the space of modular forms $M_{k+jd(p-1)}$. Set $d_j=\dim M_{k+j(p-1)}$, $d=4$ if $p=2$, $3$ if $p=3$ and $1$ if $p\geq 5$. Then \cite[Lemma 2.1]{vonk} used as in the proof of \cite[Lemma 3.1]{wan} implies that 
\[v_p(b_n(k+jd(p-1)))\geq \frac{d(p-1)}{p+1}\left(\sum_{u=0}^\ell u
(d_u-d_{u-1})+(\ell+1)(n-d_\ell)\right)-\left(1+\frac{d-1}{p+1}\right)\]
where $\ell$ is such that $d_\ell\leq n<d_{\ell+1}$.

Note that $d_u=\lfloor \frac{k+ud(p-1)}{12}\rfloor+\delta_u$,
where $\delta_u=1$ unless $k+ud(p-1)\equiv 2\pmod{12}$ in which
case $\delta_u=0$. This implies that $\frac{k+ud(p-1)}{12}-1\leq d_u\leq \frac{k+ud(p-1)}{12}+1$ and \[\frac{12n-12-k}{d(p-1)}-1<\ell \leq \frac{12n+12-k}{d(p-1)}.\]

Therefore
\begin{align*}
v_p(b_n(k+jd(p-1)))&\geq
                    \frac{d(p-1)}{p+1}\left((\ell+1)n-\sum_{u=0}d_u\right)-\left(1+\frac{d-1}{p+1}\right)n\\
  &\geq \frac{d(p-1)}{p+1}\left((\ell+1) \left(n-\frac{k}{12}-1\right)-\frac{d(p-1)\ell(\ell+1)}{24}\right)-\left(1+\frac{d-1}{p+1}\right)n.
\end{align*}
Using the bounds on $\ell$ we see that
\begin{align*}
v_p(b_n(k+jd(p-1)))&\geq
                    \frac{d(p-1)}{p+1}(\ell+1)\left(n-\frac{k}{12}-1-\frac{d(p-1)\ell}{24}\right)-\left(1+\frac{d-1}{p+1}\right)n\\
                  &>\frac{d(p-1)}{p+1}\frac{12n-12-k}{d(p-1)}(n/2-k/24-3/2)-n\\
  &=\frac{6}{p+1}(n-k/12-1)(n-k/12-3/2)-\left(1+\frac{d-1}{p+1}\right)n.
\end{align*}
When $p=2$, $d=4$ and $k$ can be taken between $0$ and $3$. We
therefore obtain the uniform bound
\begin{equation}\label{eq:vonk}
v_2(b_n)\geq 2n^2-8n+35/8>2(n-2)^2-4.
\end{equation}
for all $n\geq 1$ and $k\gg 6$. 

As in Lemma \ref{l:vertex}, to verify whether $n$ is a vertex of $N(P_{T_2})$ it suffices to compare slopes (using Theorem \ref{t:main} for $n\leq 15$): 
\[\max_{0\leq a<n}\frac{f_n-f_a}{n-a}<\frac{v_2(b_m)-f_n}{m-n}\]
for all $m>n$. However, using \eqref{eq:vonk} we see that the above inequality is automatically satisfied for $m\geq m_{n,k}$ and therefore it suffices to verify that $n$ is a vertex of the Newton polygon of the truncation in degree $m_{n,k}$.
\end{proof}
\begin{remark}
In principle, one should be able to strengthen Theorem
\ref{c:main} using Lemma \ref{l:vertex T2} and obtain a result
of the form: the Buzzard-Calegari Conjecture \ref{c:bc} is true
up to $n\leq D$. As stated, our computations suffice to show
\[N_{\BC}^{\leq 4}=N(P_{T_2})^{\leq 4},\]
for all sufficiently large weights. Unfortunately, to verify Theorem \ref{c:main} with $N(P_{T_2})^{\leq n}$ instead of $N(P_{T_2}^{\deg\leq n})$ one would need to compute Theorem \ref{t:main} up to $n=42$, which is currently unfeasible. However, computing Theorem \ref{t:main} up to $n=16$ would suffice to show $N_{\BC}^{\leq 5}=N(P_{T_2})^{\leq 5}$.

Finally, for each 2-adic ball $\mathcal{B}(a,m)$ from the proof
of Theorem \ref{c:main} we have computed a $D$ such that $N_{\BC}^{\leq D}=N(P_{T_2})^{\leq D}$, obtaining the example that $N_{\BC}^{\leq 13}=N(P_{T_2})^{\leq 13}$ for $k\equiv 34\pmod{2^{11}}$. 
\end{remark}

We end this section with an appealing application of Theorem
\ref{c:main} to Hatada's congruences, namely Corollary \ref{cor}. The computations that led to Theorem \ref{c:main}
imply that the lowest two Newton slopes of $P_{T_2}^{\deg \leq 15}$ are precisely the pairs listed in the introduction, the values depending on $k\Mod {64}$ as specified. By \eqref{eq:vonk} all Newton slopes of
$P_{T_2}$ in the region $\geq 15$ are at least $12$, and therefore
Theorem \ref{c:main} suffices to verify that the lowest 2 slopes of $P_{T_2}$ coincide with the lowest two slopes of $P_{T_2}^{\deg \leq 15}$.

\newpage

%\begin{appendices}
%\section{2-adic valuations of the traces of $T_2$}
\section{Appendix: 2-adic valuations of the traces of \texorpdfstring{$T_2$}{}}

In the following table, for each $n\leq 5$ we make explicit the terms that appear in $v_2(\Tr(\wedge^n T_p|S_{2k}))$ from Theorem~\ref{t:main}, and those defining the sequence $a_n$ from equation (\ref{eq:bc}) in Section~\ref{s:bc}. The 2-adic numbers $\Omega_i$ are given modulo $2^{50}$. 

\smallskip

The complete table for all $n\leq 15$ is available online \cite{data}. 

\begin{longtable}{l|l|l|l}
  $n$ & $v_2(k-\Omega_i)$ or $\min(n_j, 2 v_2(k-u_j))$ in $v_2(\operatorname{Tr}(\wedge^n T_2|S_{2k}))$ & $v_2(k-\ell)$ in $a_n$ & $v_2(\Omega_i-\ell)$ or  $v_2(u_j-\ell)$ \\
  \hline
  $1$ & $v_2(k- 442980431217671)$ & $v_2(k-7)$ & $10$\\
  \hline
  $2$  & $v_2(k-791247700865546)$ & $v_2(k-10)$ & $9$ \\
 & $v_2(k-31828396041227)$ & $v_2(k-11)$ & $10$ \\
 & $v_2(k-335062469580877)$ & $v_2(k-13)$ & $6$ \\
  \hline
$3$    & $v_2(k-48255093739981)$ & $v_2(k-13)$ & $6$ \\
 & $v_2(k-895017375933454)$ & $v_2(k-14)$ & $10$ \\
 & $v_2(k-16843008520207)$ & $v_2(k-15)$ & $12$ \\
 & $v_2(k-250702637217616)$ & $v_2(k-16)$ & $6$ \\
 & $v_2(k-46624142875857)$ & $v_2(k-17)$ & $6$ \\
 & $v_2(k-474794944364563)$ & $v_2(k-19)$ & $28$ \\
  \hline
  $4$
 & $v_2(k-798532487856848)$ & $v_2(k-16)$ & $6$ \\
 & $v_2(k-658899949170001)$ & $v_2(k-17)$ & $6$ \\
 & $v_2(k-568752135614482)$ & $v_2(k-18)$ & $12$ \\
 & $\min(15, 2 \cdot v_2(k - 19))$ & $2\cdot v_2(k-19)$ & $\infty$ \\ 
 & $v_2(k-1103383114654676)$ & $v_2(k-20)$ & $6$ \\
 & $v_2(k-60661288646421)$ & $v_2(k-21)$ & $8$ \\
 & $v_2(k-1080512839942166)$ & $v_2(k-22)$ & $31$ \\
 & $v_2(k-339362545926167)$ & $v_2(k-23)$ & $33$ \\
 & $v_2(k-824086375843865)$ & $v_2(k-25)$ & $11$ \\
  \hline
  $5$
 & $v_2(k-912948839579667)$ & $v_2(k-19)$ & $19$ \\
 & $v_2(k-929666093061716)$ & $v_2(k-20)$ & $6$ \\
 & $v_2(k-1090275108829461)$ & $v_2(k-21)$ & $8$ \\
 & $\min(15, 2 \cdot v_2(k - 22))$ & $2\cdot v_2(k-22)$ & $\infty$ \\ 
 & $\min(16, 2 \cdot v_2(k - 151))$ & $2\cdot v_2(k-23)$ & $7$ \\ 
 & $v_2(k-215022683507480)$ & $v_2(k-24)$ & $8$ \\
 & $v_2(k-188349340154137)$ & $v_2(k-25)$ & $8$ \\
 & $v_2(k-25411498307609)$ & $v_2(k-25)$ & $12$ \\
 & $v_2(k-255292856074266)$ & $v_2(k-26)$ & $36$ \\
 & $v_2(k-893284478091291)$ & $v_2(k-27)$ & $36$ \\
 & $v_2(k-378319707637788)$ & $v_2(k-28)$ & $11$ \\
 & $v_2(k-436532622338077)$ & $v_2(k-29)$ & $11$ \\
      & $v_2(k-669602715533343)$ & $v_2(k-31)$ & $27$\\
\end{longtable}

%\end{appendices}

\bibliographystyle{spbasic} 
%\bibliography{biblio}

\newpage

\end{document}